\renewcommand{\b}{\beta}
\newcommand{\g}{\gamma}
\newcommand{\G}{\Gamma}
\renewcommand{\d}{\delta}
\renewcommand{\l}{\lambda}
\renewcommand{\L}{\Lambda}
\newcommand{\z}{\zeta}
\renewcommand{\t}{\theta}
\newcommand{\p}{\partial}
\newcommand{\Om}{\Omega}
\newcommand{\oq}{\ {\raise 7pt\hbox{${\scriptstyle\circ}$}}
	\kern -7pt{
		\hbox{$Q$}}}
\newcommand{\R}{ \mathbb R}
\newcommand {\BS}{\mathbf S}
\newcommand {\bx}{\mathbf x}
\newcommand{\SfG}{{\sf{G}}}
\newcommand{\CH}{\mathcal H}
\newcommand{\CC}{\mathcal C}
\newcommand{\plainC}[1]{\textup{{\textsf{C}}}^{#1}}
\newcommand{\plainH}[1]{\textup{{\textsf{H}}}^{#1}}
\newcommand{\plainL}[1]{\textup{{\textsf{L}}}^{#1}}
\newcommand{\scale}{{\scaleto{1}{3pt}}}
\newcommand{\scalel}[1]
{{\scaleto{#1}{3pt}}}
\DeclareMathOperator{\iop}{{\sf Int}}
\DeclareMathOperator{\dc}{d}
\newtheorem{thm}{Theorem}[section]
\newtheorem{cor}[thm]{Corollary}
\newtheorem{lem}[thm]{Lemma}
\newtheorem{prop}[thm]{Proposition}
\theoremstyle{definition}
\newtheorem*{remark}{Remark}
\newtheorem{rem}[thm]{Remark}
\numberwithin{equation}{section}
\newcommand{\bee}{\begin{equation}}
	\newcommand{\ene}{\end{equation}}
\newcommand{\bees}{\begin{equation*}}
	\newcommand{\enes}{\end{equation*}}
\newcommand{\bes}{\begin{split}}
	\newcommand{\ens}{\end{split}}
\newcommand{\bet}{\begin{thm}}
	\newcommand{\ent}{\end{thm}}
\newcommand{\bel}{\begin{lem}}
	\newcommand{\enl}{\end{lem}}
\newcommand{\bec}{\begin{cor}}
	\newcommand{\enc}{\end{cor}}
\newcommand{\bep}{\begin{proof}}
	\newcommand{\enp}{\end{proof}}
\newcommand{\ber}{\begin{rem}}
	\newcommand{\enr}{\end{rem}}
\newcommand{\Z}{\mathbb Z}
\newcommand{\1}{\mathbbm 1}
\begin{document}
	\hoffset -4pc

\title
[ Density matrix]
{{Eigenvalue estimates for the one-particle density matrix}} 
\author{Alexander V. Sobolev}
\address{Department of Mathematics\\ University College London\\
	Gower Street\\ London\\ WC1E 6BT UK}
 \email{a.sobolev@ucl.ac.uk}
\keywords{Multi-particle Schr\"odinger operator, one-particle density matrix, eigenvalues, integral operators}
\subjclass[2010]{Primary 35J10; Secondary 47G10, 81Q10}

\begin{abstract} 
It is shown that the eigenvalues $\l_k, k=1, 2, \dots,$ of the one-particle density matrix 
satisfy the bound 
$\l_k\le C k^{-8/3}$ with a positive constant $C$.	 
\end{abstract}

\maketitle

\section{Introduction} 
Consider on $\plainL2(\R^{3N})$ the Schr\"odinger operator 
 \begin{align}\label{eq:ham}
H = \sum_{k=1}^N \bigg(-\Delta_k -  \frac{Z}{|x_k|}
\bigg) 
 + \sum_{1\le j< k\le N} \frac{1}{|x_j-x_k|},
\end{align}
describing an atom with $N$ electrons 
with coordinates $\bx = (x_1, x_2, \dots, x_N),\,x_k\in\R^3$, $k= 1, 2, \dots, N$, 
and a nucleus with charge $Z>0$. The notation $\Delta_k$ is used for 
the Laplacian w.r.t. the variable $x_k$. 
The operator $H$ acts on the Hilbert space $\plainL2(\R^{3N})$ and it is self-adjoint on the domain 
$D(H) =\plainH2(\R^{3N})$, since the potential in \eqref{eq:ham} 
is an infinitesimal perturbation 
relative to the unperturbed operator $-\Delta = - \sum_k \Delta_k$, 
see e.g. \cite[Theorem X.16]{ReedSimon2}. 
Note that we do not need to assume that the particles are fermions, i.e. that the 
underlying Hilbert space consists of anti-symmetric $\plainL2$-functions. Our results are 
not sensitive to such assumptions. 
Let $\psi = \psi(\bx)$, $\bx = (\hat\bx, x_N)$, $\hat\bx = (x_1, x_2, \dots, x_{N-1})$, 
be an eigenfunction of the operator $H$ with an eigenvalue $E\in\R$, i.e. $\psi\in D(H)$ and 
\begin{align*}
(H-E)\psi = 0.
\end{align*}
We define the one-particle density matrix as the function 
\begin{align}\label{eq:den}
\g(x, y) = \int\limits_{\R^{3N-3}} \overline{\psi(\hat\bx, x)} \psi(\hat\bx, y)\  
d\hat\bx,\quad (x,y)\in\R^3\times\R^3. 
\end{align} 
We do not discuss the importance of this object for multi-particle quantum mechanics and 
refer to the monograph 
\cite{RDM2000} for details. 
Our focus is on spectral properties of the self-adjoint non-negative 
operator $\G$ with the kernel $\g(x, y)$, which we 
call \textit{the one-electron density operator}.
Note that the operator 
$\G$ is represented as a product $\G = \Psi^*\Psi$ where 
$\Psi:\plainL2(\R^3)\to \plainL2(\R^{3N-3})$ is the operator with the kernel 
$\psi(\hat\bx, x)$. Since $\psi\in\plainL2(\R^{3N})$, the operator 
$\Psi$ is Hilbert-Schmidt, and hence $\G$ is trace class.  
Our objective is to investigate the decay of the eigenvalues $\l_k(\G)>0$, $k=1, 2, \dots,$ 
of the non-negative operator $\G$, labelled in descending order counting multiplicity. 
The significance of such information for quantum mechanical computations 
is discussed in the paper \cite{Fries2003}. In particular, it is shown in \cite{Fries2003} that 
$\G$ has infinite rank. 
Our main result is contained in Theorem \ref{thm:main} below. 
It establishes upper bounds on the decay of 
the eigenvalues $\l_k(\G)$, $k = 1, 2, \dots$ under the condition 
that $\psi$ decays exponentially as $|\bx|\to \infty$:
\begin{align}\label{eq:exp}
|\psi(\bx)|\lesssim e^{-\varkappa_\scalel{0} |\bx|_\scale},\ \bx\in\R^{3N}.
\end{align}  
Here $\varkappa_0 >0$ is a constant, and the notation ``$\lesssim$" means that the left-hand side 
is bounded from above by the right-hand side times 
some positive constant whose precise value is of 
no importance for us. This notation is used throughout the paper. 
Notice that instead of the standard Euclidean norm $|\bx|$ in \eqref{eq:exp} we have the 
$\ell_1$-norm which we denote by $|\bx|_1$. 
For the discrete eigenvalues, i.e. the ones 
below the bottom of the essential spectrum of $H$, the bound \eqref{eq:exp} follows from 
\cite{DHSV1978_79}. The exponential decay for eigenvalues away from the thresholds, including 
embedded ones, was studied 
in \cite{CombesThomas1973}, \cite{FH1982}. 
For more references and detailed discussion we quote \cite{SimonSelecta}. 

\begin{thm}\label{thm:main} 
Suppose that the eigenfunction $\psi$ satisfies the bound \eqref{eq:exp}. 
Let the function $\g(x, y)$, $(x, y)\in \R^3\times\R^3$, be defined by \eqref{eq:den}. 
Then the eigenvalues $\l_k(\G),k = 1, 2, \dots$, of the operator $\G$ satisfy the estimate 
\begin{align}\label{eq:main}
0<\l_k(\G)\lesssim k^{-\frac{8}{3}},\ \quad k = 1, 2, \dots,
\end{align}
with an implicit positive constant independent of $k$.
\end{thm}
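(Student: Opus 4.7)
The starting point is the factorization $\G=\Psi^*\Psi$ already recorded in the introduction, which gives $\l_k(\G)=s_k(\Psi)^2$, so it suffices to prove the singular-value bound
\begin{equation*}
s_k(\Psi)\lesssim k^{-4/3},\qquad k=1,2,\dots,
\end{equation*}
where $\Psi:\plainL2(\R^3_x)\to\plainL2(\R^{3N-3}_{\hat\bx})$ is the integral operator with kernel $\psi(\hat\bx,x)$. The whole strategy will be to reduce these singular values to a sharp one-variable regularity statement about $\psi$ in the single coordinate $x$.

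I would begin with a dyadic partition of unity $\{\chi_\ell\}_{\ell=0}^\infty$ in $x$, with $\chi_0$ supported in $\{|x|\le 2\}$ and $\chi_\ell$ supported in $\{2^{\ell-1}\le |x|\le 2^{\ell+1}\}$ for $\ell\ge 1$, and write $\Psi=\sum_\ell\Psi_\ell$ with $\Psi_\ell$ having kernel $\chi_\ell(x)\psi(\hat\bx,x)$. The Ky Fan inequality then reduces the bound on $s_k(\Psi)$ to summing bounds on $s_k(\Psi_\ell)$. For $\ell\ge 1$ the support of $\chi_\ell$ is separated from the nucleus, so $\psi$ is real analytic in $x$ on a neighborhood of that support by interior elliptic regularity for the multi-particle Schr\"odinger operator, and \eqref{eq:exp} kills the $\hat\bx$-integration outside a ball of radius proportional to $2^\ell$ at an exponentially small price. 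Approximation of the resulting smooth kernel by truncated expansions in a polynomial basis on a bounded ball yields singular values decaying faster than any power of $k$ on each annulus, and summation in $\ell$ using \eqref{eq:exp} leaves only a tail that is negligible relative to $k^{-4/3}$.

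The whole difficulty thus collapses to $\Psi_0$, i.e.\ the piece localised to a fixed neighborhood of the nucleus, where the Kato cusp at $x=0$ and the integrated effects of the electron coalescences $x=x_j$, $j<N$, limit the smoothness of $\psi$ in $x$. My target is to establish, in an $\plainL2(\R^{3N-3}_{\hat\bx})$-valued sense, a sharp Sobolev bound of order $s=4$ for $x\mapsto\psi(\hat\bx,x)$ on bounded sets in $\R^3_x$. A Birman--Solomyak style approximation of $\Psi_0$ by finite-rank operators built from a polynomial or spline basis will then yield $s_k(\Psi_0)\lesssim k^{-s/3}=k^{-4/3}$, and hence $\l_k(\G)\lesssim k^{-8/3}$. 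To extract this regularity, which goes well beyond the obvious $\psi\in\plainH2(\R^{3N})$, I would peel off the explicit cusp behaviour by a Jastrow-type ansatz $\psi=e^{F}\phi$, with $F$ a Lipschitz function absorbing the $|x_k|$ and $|x_j-x_k|$ cusps, and then bootstrap the regularity of $\phi$ from the modified elliptic equation it satisfies, iterating the Schr\"odinger equation to trade Coulomb singularities for extra $x$-derivatives.

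The main obstacle is unmistakably this last step: establishing the sharp Sobolev regularity of order $s=4$ in the single particle coordinate $x$ for the multi-particle eigenfunction, averaged over the remaining $3N-3$ variables, in spite of the Coulomb singularities in $H$. Any loss in the regularity exponent propagates one-to-one, via $s_k(\Psi)\lesssim k^{-s/3}$ and $\l_k(\G)=s_k(\Psi)^2$, into a loss in the decay exponent in \eqref{eq:main}; the specific value $8/3$ is precisely the quantitative shadow of the optimal Sobolev order $s=4$ recoverable in this one-particle geometry after removing the Kato factor.
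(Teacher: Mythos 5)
Your reduction to $s_k(\Psi)\lesssim k^{-4/3}$ via $\l_k(\G)=s_k(\Psi)^2$ is exactly the paper's starting point, but the regularity statement on which your whole argument rests is false, in two places. First, away from the nucleus $\psi$ is \emph{not} real analytic in $x$: the singular set of $x\mapsto\psi(\hat\bx,x)$ is $\{0,x_1,\dots,x_{N-1}\}$, and the electron coordinates $x_j$ range over all of $\R^{3N-3}$, so every dyadic annulus (indeed every unit cube in $x$) contains electron--electron coalescence points for a set of configurations $\hat\bx$ of positive measure. Interior elliptic regularity only separates $x$ from the nucleus, not from the other electrons, so the ``rapidly decaying singular values on each annulus'' step fails and the difficulty does not collapse onto a neighbourhood of $x=0$. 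Second, the target regularity $s=4$ is unattainable: by Kato's cusp condition $\psi$ behaves like $\psi(\hat\bx,x_j)(1+\tfrac12|x-x_j|+\cdots)$ near a coalescence point, and $|x|$ lies in $\plainH{s}_{\textup{loc}}(\R^3)$ only for $s<5/2$. No Jastrow factorization $\psi=e^F\phi$ can help, because the operator is built from $\psi$ itself, the cusp sits in $e^F$, and $e^F$ depends jointly on $x$ and $\hat\bx$ so it does not split off as a bounded multiplier on either side.

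The arithmetic that makes your plan look consistent is also off: for a kernel that is $\plainH{l}$ in $x\in\R^3$ and square integrable in $\hat\bx$, the Birman--Solomyak bound (Proposition \ref{prop:BS}) gives $s_k\lesssim k^{-\frac12-\frac{l}{3}}$, not $k^{-l/3}$. With the correct rate the critical regularity for $k^{-4/3}$ is $l=5/2$ --- precisely the borderline exponent that the Kato cusp fails to reach. This is why the paper does not prove any single Sobolev estimate for $\psi$; instead it uses the Fournais--S{\o}rensen pointwise bounds $|\p_x^m\psi|\lesssim \dc(\hat\bx,x)^{1-|m|_1}e^{-\varkappa|\bx|_1}$ and, on each unit cube $\CC_n$, splits the kernel into pieces supported at distance $\gtrsim\varepsilon$ or $\lesssim\varepsilon$ from the coalescence points (after subtracting the correction $\eta^{(\d)}$ so that the near piece vanishes linearly there). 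Each piece is estimated with a different Sobolev order, and the cutoff scales $\d=\d_k$, $\varepsilon=\varepsilon_k\sim k^{-1/3}$ are chosen $k$-dependently so that the competing terms balance at exactly $k^{-4/3}$. To repair your proof you would need to replace the ``prove $\plainH4$ regularity'' step by such a multi-scale endpoint argument (or an interpolation-space substitute); as written, the central claim is not just unproved but untrue, and with the correct singular-value exponent your own scheme would deliver only $k^{-4/3+\epsilon}$.
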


\begin{remark}
\begin{enumerate}
\item 
The bound \eqref{eq:main} is sharp. This is confirmed by the asymptotic formula for the eigenvalues $\l_k(\G)$ 
which will be proved in a subsequent publication \cite{Sob2020}.  In fact, 
Theorem \ref{thm:main} or, more precisely, Theorem \ref{thm:psifull} can be regarded as a preparation for the 
sharp asymptotic formula in \cite{Sob2020}.
\item
Theorem \ref{thm:main} extends to the case of a molecule with several nuclei whose positions are fixed. 
The modifications are straightforward.
\item 
The choice of the norm $|\bx|_1$ instead of the 
Eucledian norm $|\bx|$ in the estimate \eqref{eq:exp} is made for computational convenience 
later in the proof.  
\end{enumerate}
\end{remark}

The strategy of the proof is quite straightforward: 
by virtue of the factorization $\G = \Psi^*\Psi$, mentioned a few lines earlier, 
we have $\l_k(\G) = s_k(\Psi)^2, k = 1, 2, \dots$, where 
$s_k(\Psi)$ are the singular values ($s$-values) of the operator $\Psi$. 
It is well-known that the rate of decay of singular values for integral operators 
depends on the smoothness of their kernels, and the appropriate estimates  
via suitable Sobolev norms can be found in the monograph \cite{BS1977} by M.S. Birman and M.Z. Solomyak. 
The regularity of $\psi$ has been well-studied in the literature. 
To begin with, according to the classical elliptic theory, due to the 
analyticity of the Coulomb potential $|x|^{-1}$ for $x\not = 0$, 
the function $\psi$ is real analytic away from the particle 
coalescence points. 
A more challenging problem is to understand the behaviour of $\psi$ at the coalescence points. 
The first result in this direction belongs to T. Kato \cite{Kato1957}, 
who showed that the function $\psi$ is Lipschitz. 
More detailed information 
on $\psi$ at the coalescence points was obtained, e.g. in \cite{FHOS2005}, \cite{FHOS2009}, \cite{HOSt1994}, 
and in the recent paper \cite{FS2018} by S. Fournais and T.\O. S\o rensen.  
The results of \cite{BS1977} and \cite{FS2018} are of crucial importance for the proof of Theorem \ref{thm:main}. 
A combination of the efficient bounds for the derivatives of the 
function $\psi$ obtained in \cite{FS2018}, and the estimates for 
the singular values in \cite{BS1977}, 
leads to the bound  
$s_k(\Psi)\lesssim k^{-4/3}$, and hence to \eqref{eq:main}. 
%

The plan of the paper is as follows.
In Sect. \ref{sect:reg} we list the facts that serve as ingredients of the proof. 
Although our aim is to prove the bound $s_k(\Psi)\lesssim k^{-4/3}$, 
in Sect. \ref{sect:prep} in Theorem \ref{thm:psifull} we state a bound for the operator 
$\Psi$ with weights which will be useful in the study of the spectral asymptotics for 
$\Psi$. The rest of Sect. \ref{sect:prep} 
 provides some preliminary estimates for auxiliary integral operators. 
These estimates are put together in Sect. \ref{sect:proofs} to complete the proof 
of Theorems \ref{thm:psifull} and \ref{thm:main}.

We conclude the introduction with some general notational conventions.  

\textit{Coordinates.} 
As mentioned earlier, we use the following standard notation for the coordinates: 
$\bx = (x_1, x_2, \dots, x_N)$,\ where $x_j\in \R^3$, $j = 1, 2, \dots, N$. 
The vector $\bx$ is usually represented in the form 
$\bx = (\hat\bx, x_N)$ with  
$\hat\bx = (x_1, x_2, \dots, x_{N-1})\in\R^{3N-3}$.  
In order to write 
formulas in a more compact and unified way, we sometimes use the notation 
$x_0 = 0$. 

In the space $\R^d, d\ge 1,$ 
the notation $|x|$ stands for the Euclidean norm, whereas $|x|_1$ denotes the $\ell_1$-norm.

\textit{Indicators.} For any set $\L\subset\R^d$ we denote by $\1_{\L}$ its indicator function (or indicator).
 
\textit{Derivatives.} 
Let $\mathbb N_0 = \mathbb N\cup\{0\}$.
If $x = (x', x'', x''')\in \R^3$ and $m = (m', m'', m''')\in \mathbb N_0^3$, then 
the derivative $\p_x^m$ is defined in the standard way:
\begin{align*}
\p_x^m = \p_{x'}^{m'}\p_{x''}^{m''}\p_{x'''}^{m'''}.
\end{align*} 
  
\textit{Bounds.} 
As explained earlier, for two non-negative numbers (or functions) 
$X$ and $Y$ depending on some parameters, 
we write $X\lesssim Y$ (or $Y\gtrsim X$) if $X\le C Y$ with 
some positive constant $C$ independent of those parameters.
 To avoid confusion we may comment on the nature of 
(implicit) constants in the bounds. 

\section{Ingredients of the proof}\label{sect:reg}
   
In this section we 
list three ingredients of the proof of the main Theorem 
\ref{thm:main}.

\subsection{Regularity of the eigenfunction} 
We need some efficient bounds for 
the derivatives of the eigenfunction away from the coalescence points, obtained by 
S. Fournais and T.\O. S\o rensen in \cite{FS2018}. 
Let 
\begin{align*}
\dc(\hat\bx, x) = \min\{|x|, |x-x_j|, \ j = 1, 2, \dots, N-1\}.
\end{align*}
The following proposition is a consequence of \cite[Corollary 1.3]{FS2018}:

\begin{prop}\label{prop:FS}
Assume that $\psi$ satisfies 
\eqref{eq:exp}. 
Then for all multi-indices $m\in\mathbb N_0^3$, $|m|_1\ge 1$, we have
\begin{align}\label{eq:FS}
|\p_x^m \psi(\hat\bx, x)|
\lesssim\dc(\hat\bx, x)^{1-l
} e^{-\varkappa_{\scalel{l}} {|\bx|_\scale}}, \ l = |m|_1,
\end{align}
with some $\varkappa_l >0$.
\end{prop}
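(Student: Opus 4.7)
The plan is to extract \eqref{eq:FS} from the Fournais--S\o rensen local regularity estimate [FS2018, Corollary 1.3] by combining it with the exponential decay hypothesis \eqref{eq:exp}. I would first recall that the corollary gives, for each multi-index $m$ with $l = |m|_1 \ge 1$, a pointwise bound of the schematic form
\begin{align*}
|\p_x^m\psi(\hat\bx, x)| \lesssim d(\hat\bx, x)^{1-l}\, \sup_{\by\in B_\bx} |\psi(\by)|,
\end{align*}
where $B_\bx\subset\R^{3N}$ is a ball centered at $\bx$ whose radius can be taken to be a small fraction of $d(\hat\bx, x)$, and the implicit constant depends on $l$ but not on $\bx$. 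The smallness of the radius ensures that $B_\bx$ avoids the coalescence locus in the $x$-variable (for $\hat\bx$ fixed), so the Coulomb interactions are real-analytic on $B_\bx$ and the interior elliptic bounds underlying [FS2018] apply uniformly in $\bx$. The first step is therefore just to check that our eigenfunction $\psi$ satisfies the hypotheses of that corollary, and to fix a uniform choice of such a ball $B_\bx$.

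The second step is to insert the decay \eqref{eq:exp} into the supremum. Since for $\by\in B_\bx$ one has $|\by|_1 \ge |\bx|_1 - c\, d(\hat\bx, x)$ with $c$ depending only on the (fixed) shape of the ball, \eqref{eq:exp} gives
\begin{align*}
\sup_{\by\in B_\bx} |\psi(\by)| \lesssim e^{-\varkappa_0 |\bx|_1 + c\varkappa_0\, d(\hat\bx, x)}.
\end{align*}
Because $d(\hat\bx, x) \le |x| \lesssim |\bx|_1$, the exponential correction is itself bounded by $e^{c'\varkappa_0 |\bx|_1}$ with $c'<1$ provided the radius of $B_\bx$ is chosen small enough (in particular, $c$ small enough). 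Setting $\varkappa_l = (1-c')\varkappa_0 > 0$ and multiplying by the weight $d(\hat\bx, x)^{1-l}$ then yields \eqref{eq:FS}.

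The main technical obstacle is essentially bookkeeping: one has to trace through the statement of [FS2018, Corollary 1.3] to confirm that the radius of $B_\bx$ can be taken as an arbitrarily small constant multiple of $d(\hat\bx, x)$ with the implicit constant in the derivative bound remaining bounded uniformly in $\bx$. Once this is in hand, the deduction of \eqref{eq:FS} is a direct substitution, and the only price one pays is a slight reduction of the exponential rate from $\varkappa_0$ to a positive $\varkappa_l<\varkappa_0$, which is precisely what the statement of Proposition \ref{prop:FS} allows.
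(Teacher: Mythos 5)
Your argument is correct and is exactly the route the paper takes: Proposition \ref{prop:FS} is stated there as a direct consequence of \cite[Corollary 1.3]{FS2018} with no further written proof, and your two steps --- the local derivative bound $|\p_x^m\psi(\hat\bx,x)|\lesssim \dc(\hat\bx,x)^{1-l}\sup_{B_\bx}|\psi|$ followed by insertion of \eqref{eq:exp} and absorption of the correction $e^{c\varkappa_0 \dc(\hat\bx,x)}$ via $\dc(\hat\bx,x)\le|x|\le|\bx|_1$ at the price of a reduced rate $\varkappa_l>0$ --- are precisely the intended deduction. The bookkeeping you flag (uniformity of the implicit constant and the admissible radius of $B_\bx$) is indeed all that remains to be checked against the statement of \cite[Corollary 1.3]{FS2018}.
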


The precise values of the constants $\varkappa_l>0$ are insignificant for us, 
and therefore we may assume that  
\begin{align}\label{eq:monotone}
\varkappa_0 = \varkappa_1\ge \varkappa_2\ge \dots >0. 
\end{align}
Let us rewrite the bounds \eqref{eq:FS} 
using the notation $x_0 = 0$.
With this convention, we have 
\begin{align*}
\dc(\hat\bx, x) = \min\{|x-x_j|, \ j = 0, 1, 2, \dots, N-1\}
\end{align*}
and 
\begin{align*}
\dc(\hat\bx, x)^{-1}\le \sum_{0\le j\le N-1} |x-x_j|^{-1}.
\end{align*} 
Therefore \eqref{eq:exp} and \eqref{eq:FS} imply that
\begin{align}\label{eq:FS1}
|\p_x^m \psi(\hat\bx, x)|\lesssim  e^{-\varkappa_{\scalel{l}} |\bx|_\scale}
\bigg(1+\sum_{0\le j\le N-1} |x-x_j|^{1-l}\bigg),\quad \ l = |m|_1,
\end{align}
for all $m\in\mathbb N_0^3$.

\subsection{Compact operators} 
Our main reference for compact operators is the book \cite{BS}. 
Let $\CH$ and $\mathcal G$ be separable Hilbert spaces.  
Let $T:\CH\to\mathcal G$ be a compact operator. 
If $\CH = \mathcal G$ and $T=T^*\ge 0$, then $\l_k(T)$, $k= 1, 2, \dots$, 
denote the positive eigenvalues of $T$ 
numbered in descending order counting multiplicity. 
For arbitrary spaces $\CH$, $\mathcal G$ and compact $T$, by $s_k(T) >0$, 
$k= 1, 2, \dots$, we denote the singular values of 
$T$ defined by $s_k(T)^2 = \l_k(T^*T) = \l_k(TT^*)$.  
Note the useful inequality 
\begin{align}\label{eq:2k}
s_{2k}(T_1+T_2)\le s_{2k-1}(T_1+T_2)\le s_k(T_1) + s_k(T_2),
\end{align}
which holds for any two compact $T_1, T_2$, see \cite[Formula (11.1.14)]{BS}. 
We classify compact operators by the rate of decay of their singular values. 
If $s_k(T)\lesssim k^{-1/p}, k = 1, 2, \dots$, 
with some $p >0$, then we say that $T\in \BS_{p, \infty}$ and denote
\begin{align}\label{eq:quasi}
\| T\|_{p, \infty} = \sup_k k^{\frac{1}{p}} s_k(T).
\end{align}
The class $\BS_{p, \infty}$ is a complete linear space with the quasi-norm $\|T\|_{p, \infty}$, see 
\cite[\S 11.6]{BS}.
For $p\in (0, 1)$ the quasi-norm satisfies the following ``triangle" inequality for  
operators $T_j\in\BS_{p, \infty}$, $j = 1, 2, \dots$:
\begin{align}\label{eq:triangle}
\big\|\sum_{j} T_j\big\|_{p, \infty}^p\le (1-p)^{-1}
\sum_j \|T_j\|_{p, \infty}^p,
\end{align}
see 
\cite[Lemmata 7.5, 7.6]{AJPR2002}, \cite[\S 1]{BS1977} and references therein. 
For the case $p >1$ see \cite[\S 11.6]{BS}, but we do not need it in what follows. 
 
For $T\in \BS_{p, \infty}$ the following number is finite:
\begin{align}\label{eq:limsupinf}
\SfG_p(T) = \big(\limsup_{k\to\infty} k^{\frac{1}{p}}s_k(T)\big)^{p},
\end{align}
and it clearly satisfies the inequality
\begin{align}\label{eq:estim}
\SfG_p(T)\le \|T\|_{p, \infty}^p.
\end{align}
More precisely, let $\BS_{p, \infty}^\circ\subset\BS_{p, \infty}$ be the closed subspace of all 
operators $R\in\BS_{p, \infty}$ with $\SfG_p(R) = 0$.
As explained in \cite[Theorem 11.6.10]{BS}, 
\begin{align}\label{eq:inf}
\SfG_p(T) = \inf_{R\in \BS_{p, \infty}^\circ}\| T+R\|_{p, \infty}^p.
\end{align}
The functional $\SfG_p(T)$, $p <1$, also satisfies the inequality of the type \eqref{eq:triangle}:

\begin{lem}\label{lem:triangleg}
Suppose that $T_j\in \BS_{p, \infty}$, $j = 1, 2, \dots, $ with some $p <1$ and that 
\begin{align}\label{eq:conv}
\sum_{j} \|T_j\|_{p, \infty}^p<\infty.
\end{align} 
Then 
\begin{align}\label{eq:triangleg}
\SfG_p\big(\sum_{j} T_j\big)\le (1-p)^{-1}
\sum_j \SfG_p (T_j).
\end{align}
\end{lem}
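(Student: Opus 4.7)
The plan is to exploit the infimum characterisation \eqref{eq:inf} of $\SfG_p$ combined with the quasi-triangle inequality \eqref{eq:triangle}. Since $\SfG_p(T_j)\le \|T_j\|_{p,\infty}^p$ by \eqref{eq:estim}, the hypothesis \eqref{eq:conv} makes the right-hand side of \eqref{eq:triangleg} finite (otherwise there is nothing to prove). Fix $\vare>0$ and, using \eqref{eq:inf}, for each $j$ select an operator $R_j\in \BS_{p,\infty}^\circ$ such that
\begin{align*}
\|T_j+R_j\|_{p,\infty}^p\le \SfG_p(T_j)+\vare\, 2^{-j}.
\end{align*}
Summing and using $\SfG_p(T_j)\le \|T_j\|_{p,\infty}^p$ together with \eqref{eq:conv} gives $\sum_j\|T_j+R_j\|_{p,\infty}^p<\infty$.

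Next I would check that the series $\sum_j R_j$ converges in the quasi-Banach space $\BS_{p,\infty}$. Writing $R_j=(T_j+R_j)-T_j$ and applying \eqref{eq:triangle} to two summands shows $\|R_j\|_{p,\infty}^p\lesssim \|T_j+R_j\|_{p,\infty}^p+\|T_j\|_{p,\infty}^p$, so $\sum_j\|R_j\|_{p,\infty}^p<\infty$. Then \eqref{eq:triangle} applied to the tails yields
\begin{align*}
\Big\|\sum_{j=N}^{M}R_j\Big\|_{p,\infty}^p\le (1-p)^{-1}\sum_{j=N}^{M}\|R_j\|_{p,\infty}^p\longrightarrow 0
\end{align*}
as $N\to\infty$, so the partial sums $S_N=\sum_{j\le N}R_j$ form a Cauchy sequence and converge to some $R\in\BS_{p,\infty}$. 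Since $\BS_{p,\infty}^\circ$ is a linear subspace, each $S_N$ lies in $\BS_{p,\infty}^\circ$, and since it is closed by \cite[Theorem 11.6.10]{BS}, the limit $R$ is also in $\BS_{p,\infty}^\circ$. The same argument applied to $T_j$ shows $T:=\sum_j T_j\in \BS_{p,\infty}$ and, by continuity of addition, $T+R=\sum_j(T_j+R_j)$ as an identity in $\BS_{p,\infty}$.

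Now \eqref{eq:inf} gives $\SfG_p(T)\le \|T+R\|_{p,\infty}^p$, and another application of \eqref{eq:triangle} (taking the limit in the truncated sums) yields
\begin{align*}
\SfG_p\Big(\sum_j T_j\Big)\le \Big\|\sum_j(T_j+R_j)\Big\|_{p,\infty}^p \le (1-p)^{-1}\sum_j\|T_j+R_j\|_{p,\infty}^p\le (1-p)^{-1}\Big(\sum_j \SfG_p(T_j)+\vare\Big).
\end{align*}
Letting $\vare\to 0$ completes the proof. The one place that requires care is justifying that the infinite sum $\sum_j R_j$ defines a legitimate test element for the infimum in \eqref{eq:inf}, which is where the closedness of $\BS_{p,\infty}^\circ$ and the quasi-Banach completeness of $\BS_{p,\infty}$ do the work; everything else reduces to iterating \eqref{eq:triangle}.
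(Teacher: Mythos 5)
Your proof is correct, but it handles the infinite sum differently from the paper. The paper's argument first truncates: using \eqref{eq:conv} it chooses $N$ so that the tail satisfies $\sum_{j>N}\|T_j\|_{p,\infty}^p<\varepsilon$, then perturbs only the first $N$ terms by correctors $R_1,\dots,R_N\in\BS_{p,\infty}^\circ$, writes $T+\sum_{j\le N}R_j=\sum_{j\le N}(T_j+R_j)+\sum_{j>N}T_j$, applies \eqref{eq:triangle}, and minimizes over the finitely many $R_j$ via \eqref{eq:inf}. This way only the linear-subspace property of $\BS_{p,\infty}^\circ$ for finite sums is needed, and the tail is absorbed into the $\varepsilon$ directly through \eqref{eq:conv}. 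You instead perturb every term, choosing $R_j$ with $\|T_j+R_j\|_{p,\infty}^p\le\SfG_p(T_j)+\varepsilon 2^{-j}$, and must then justify that $R=\sum_j R_j$ is an admissible competitor in \eqref{eq:inf}; this costs you the completeness of $\BS_{p,\infty}$ (to sum the Cauchy series of correctors) and the closedness of $\BS_{p,\infty}^\circ$ (to keep the limit in the subspace). Both facts are available --- the paper records that $\BS_{p,\infty}$ is complete and that $\BS_{p,\infty}^\circ$ is a closed subspace --- so your route is legitimate; it is just somewhat heavier machinery for the same conclusion, whereas the truncation trick shows that hypothesis \eqref{eq:conv} is really there to control the tail rather than to build an infinite corrector. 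Your steps (the bound $\|R_j\|_{p,\infty}^p\le(1-p)^{-1}(\|T_j+R_j\|_{p,\infty}^p+\|T_j\|_{p,\infty}^p)$, the Cauchy tail estimate, and the identification $T+R=\sum_j(T_j+R_j)$ by continuity of addition) all check out.
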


\begin{proof}  
By \eqref{eq:triangle} the operator $T = \sum_j T_j$ belongs to $\BS_{p, \infty}$, so that the left-hand side is finite. Furthermore, 
due to \eqref{eq:estim} and to the condition \eqref{eq:conv} 
the right-hand side of \eqref{eq:triangleg} is finite as well. 
Fix an $\varepsilon>0$ and pick $N$ such that 
\begin{align*}
\sum_{j=N+1}^\infty \|T_j\|_{p, \infty}^p<\varepsilon.
\end{align*}
Then by \eqref{eq:inf} and \eqref{eq:triangle}, 
for any $R_j\in \BS_{p, \infty}^\circ$, $j = 1, 2, \dots, N$, 
we have the estimate
\begin{align*}
\SfG_p(T)\le &\ \bigg\| \sum_{j=1}^N (T_j+R_j) + \sum_{j=N+1}^\infty T_j \bigg\|_p^p\\
\le &\ (1-p)^{-1}\bigg( \sum_{j=1}^N \|T_j+R_j\|_{p, \infty}^p + \varepsilon
\bigg).
\end{align*}
Minimizing the right-hand side over $R_j, j = 1, 2, \dots, N$, by 
\eqref{eq:inf} we get the estimate 
\begin{align*}
\SfG_p(T)\le &\ (1-p)^{-1}\bigg(\sum_{j=1}^N\SfG_p(T_j) + \varepsilon\bigg)\\
\le &\ (1-p)^{-1}\bigg(\sum_{j=1}^\infty \SfG_p(T_j) + \varepsilon\bigg).
\end{align*}
Since $\varepsilon>0$ is arbitrary, we obtain \eqref{eq:triangleg}.
\end{proof}

\subsection{Singular values of integral operators}
The final ingredient of the proof is the result due to M.S. Birman and M.Z. Solomyak, 
investigating the 
membership of integral operators in the class $\BS_{p, \infty}$ with some $p>0$. 
For estimates of the 
singular values we rely on \cite[Proposition 2.1]{BS1977}, see also 
\cite[Theorem 11.8.4]{BS},  
which we state here in a form convenient for 
our purposes. 
Let $\CC = (0, 1)^d\subset\R^d, d\ge 1$, be the unit cube. 
%

\begin{prop}\label{prop:BS}
Let $T_{ba}: \plainL2(\CC)\to\plainL2(\R^n)$, 
be the integral operator of the form
\begin{align*}
(T_{ba}u)(t) = b(t) \int_{\CC} T(t, x) a(x) u(x)\,dx,
\end{align*}
where $a\in\plainL2(\CC)$, $b\in\plainL2_{\tiny{\rm loc}}(\R^n)$, and 
the kernel $T(t, x)$, $t\in\R^n$, $x\in\CC$, is such that 
$T(t, \ \cdot\ )\in \plainH{l}(\CC)$ with some $l = 1, 2, \dots$, $2l >d$,
a.e. $t\in\R^n$.  
Then 
\begin{align*}
s_k(T_{ba})\lesssim k^{-\frac{1}{2}-\frac{l}{d}} 
\biggl[\int_{\R^n}  \|T(t, \ \cdot\ )\|_{\plainH{l}}^2\, |b(t)|^2\, dt\biggr]^{\frac{1}{2}} 
\|a\|_{\plainL2(\CC)},
\end{align*}
$k = 1, 2, \dots$, 
with some implicit constant independent of the kernel $T$, weights $a, b$ and the index $k$. In other words, 
$T_{ba}\in\BS_{q, \infty}$ with 
\begin{align*}
\frac{1}{q} = \frac{1}{2}+\frac{l}{d},
\end{align*}
and
\begin{align*}
\|T_{ba}\|_{q, \infty}\lesssim \biggl[\int_{\R^n}  
\|T(t,\ \cdot\ )\|_{\plainH{l}}^2\, |b(t)|^2 \,dt \biggr]^{\frac{1}{2}}
\|a\|_{\plainL2(\CC)}.
\end{align*}
\end{prop}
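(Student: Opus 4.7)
The plan is to approximate the kernel $T(t,\cdot)$ in the variable $x$ by piecewise polynomials on a dyadic decomposition of $\CC$, and combine the approximations across scales via the quasi-triangle inequality~\eqref{eq:triangle}.

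Because $2l>d$, the Sobolev embedding $\plainH{l}(\CC)\hookrightarrow\plainL{\infty}(\CC)$ is in force, which immediately yields the operator-norm bound $\|T_{ba}\|\lesssim\|a\|_{\plainL{2}(\CC)}\|T\|_B$, where $\|T\|_B^2:=\int_{\R^n}|b(t)|^2\|T(t,\cdot)\|_{\plainH{l}}^2\,dt$. For each integer $j\ge 0$ I would partition $\CC$ into $2^{jd}$ congruent sub-cubes $\CC_\alpha^{(j)}$ of side $2^{-j}$, and on each approximate $T(t,\cdot)|_{\CC_\alpha^{(j)}}$ by a polynomial $P_\alpha^{(j)}(t,\cdot)$ in $x$ of degree $<l$ produced by the Bramble--Hilbert lemma, with error
\begin{align*}
\|T(t,\cdot)-P_\alpha^{(j)}(t,\cdot)\|_{\plainL{\infty}(\CC_\alpha^{(j)})}\lesssim 2^{-j(l-d/2)}\,\|T(t,\cdot)\|_{\plainH{l}(\CC_\alpha^{(j)})}.
\end{align*}
The assembled kernel $T^{(j)}(t,x)=\sum_\alpha P_\alpha^{(j)}(t,x)\,\1_{\CC_\alpha^{(j)}}(x)$ defines a rank-$\lesssim 2^{jd}$ operator $T_{ba}^{(j)}$, and the telescoping decomposition $T_{ba}=T_{ba}^{(0)}+\sum_{j\ge 0}R_j$ with $R_j:=T_{ba}^{(j+1)}-T_{ba}^{(j)}$ gives increments satisfying $\mathrm{rank}(R_j)\lesssim 2^{jd}$ and (by summing local $\plainL{\infty}$-errors against $|a|^2$ over sub-cubes) $\|R_j\|_{\mathrm{HS}}\lesssim 2^{-j(l-d/2)}\|a\|_{\plainL{2}}\|T\|_B$.

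The main obstacle is promoting these rank/Hilbert--Schmidt bounds into a $\BS_{q,\infty}$-quasi-norm estimate with $1/q=1/2+l/d$. The naive combination $\|R_j\|_{q,\infty}\le\mathrm{rank}(R_j)^{1/q-1/2}\|R_j\|_{\mathrm{HS}}$ yields only $\|R_j\|_{q,\infty}\lesssim 2^{jd/2}\|a\|_{\plainL{2}}\|T\|_B$, which \emph{grows} in $j$ and therefore cannot be summed through~\eqref{eq:triangle}. The Birman--Solomyak resolution is to choose the $P_\alpha^{(j)}$ as $\plainL{2}$-orthogonal projections onto polynomials of degree $<l$ on each $\CC_\alpha^{(j)}$, so that in the $x$-variable the kernels of successive $R_j$'s belong to the mutually orthogonal ``detail'' subspaces $V_{j+1}\ominus V_j$ of a multiresolution hierarchy. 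A more refined singular-value count for $R_j$, exploiting this orthogonality together with polynomial inverse inequalities on the sub-cubes, sharpens the bound to geometric decay $\|R_j\|_{q,\infty}\lesssim 2^{-\varepsilon j}\|a\|_{\plainL{2}}\|T\|_B$ for some $\varepsilon>0$. The quasi-triangle inequality~\eqref{eq:triangle}, applicable since $q<1$ (equivalent to $2l>d$), then sums the increments, and~\eqref{eq:quasi} delivers the claimed singular-value estimate $s_k(T_{ba})\lesssim k^{-1/2-l/d}\|a\|_{\plainL{2}}\|T\|_B$.
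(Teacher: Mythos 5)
First, a point of reference: the paper does not prove Proposition \ref{prop:BS} at all --- it is imported verbatim (in adapted form) from \cite[Proposition 2.1]{BS1977}, see also \cite[Theorem 11.8.4]{BS} --- so your proposal is an attempt to reprove a cited result rather than to reproduce an argument in the text. As it stands, the attempt has a genuine gap at exactly the point you label ``the main obstacle''. The assertion that taking the $P_\alpha^{(j)}$ to be $\plainL2$-orthogonal projections and invoking ``orthogonality of detail subspaces plus inverse inequalities'' upgrades $\|R_j\|_{q,\infty}$ to $2^{-\varepsilon j}$ is not proved and is false in general. Orthogonality in the $x$-variable only controls $\sum_k s_k(R_j)^2=\|R_j\|_{\mathrm{HS}}^2$; it says nothing about how that mass is distributed over the $\asymp 2^{jd}$ nonzero singular values. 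For a kernel whose $\plainH{l}$-mass sits at scale $2^{-j}$ (say $T(t,x)=\sum_\alpha c_\alpha(t)h_\alpha(x)$ with $h_\alpha$ mean-zero bumps on the subcubes and orthonormal $c_\alpha$, $a=b=1$) the singular values of $R_j$ are flat, $s_k(R_j)\sim 2^{-jd/2}\|R_j\|_{\mathrm{HS}}$ for $k\lesssim 2^{jd}$, which gives $\|R_j\|_{q,\infty}\sim 2^{jl}\|R_j\|_{\mathrm{HS}}\sim 1$ uniformly in $j$; distributing the $\plainH{l}$-mass as $j^{-2}$ over all scales then defeats the summation in \eqref{eq:triangle}, since $\sum_j j^{-q}$ diverges for $q<1$. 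This is the standard fact that the weak classes $\BS_{q,\infty}$ cannot be reached by naive scale-by-scale telescoping.

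Two ingredients of the actual Birman--Solomyak proof are missing. First, there is no telescoping: for each $k$ one builds a \emph{single} piecewise-polynomial approximant $F_k$ of rank $\lesssim k$ and applies \eqref{eq:2k} with $s_k(F_k)=0$ to get $s_{2k}(T_{ba})\le s_k(T_{ba}-F_k)\le k^{-1/2}\|T_{ba}-F_k\|_{\mathrm{HS}}$; the factor $k^{-1/2}$ --- precisely the difference between the exponent $l/d$ that an approximation-number bound yields and the claimed $1/2+l/d$ --- comes from Ky Fan's inequality applied to the Hilbert--Schmidt remainder, not from decay of increments. Second, because $a$ is only in $\plainL2(\CC)$, the uniform dyadic partition is itself inadequate: your remainder bound really reads $2^{-j(l-d/2)}\max_\alpha\|a\|_{\plainL2(\CC_\alpha^{(j)})}\|T\|_B$, and replacing the maximum by $\|a\|_{\plainL2(\CC)}$ wastes a further factor $k^{1/2}$ that cannot be recovered afterwards. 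Birman--Solomyak instead partition $\CC$ into $\lesssim k$ cubes $Q$ chosen adaptively (their covering lemma for the measure $|a|^2dx$) so that $|Q|^{(2l-d)/d}\int_Q|a|^2\lesssim k^{-2l/d}\|a\|_{\plainL2}^2$ on every piece; summing the local $\plainL\infty$ approximation errors against $|a|^2$ over such a partition gives $\|T_{ba}-F_k\|_{\mathrm{HS}}\lesssim k^{-l/d}\|a\|_{\plainL2}\|T\|_B$, and the Ky Fan step then delivers the stated $s_k(T_{ba})\lesssim k^{-1/2-l/d}\|a\|_{\plainL2}\|T\|_B$. Your local $\plainL\infty$ Bramble--Hilbert estimate and the use of $2l>d$ are correct; to repair the proof, replace the multiresolution/telescoping step by the single-scale Ky Fan argument and the uniform partition by the adaptive one.
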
   

It is straightforward to check that 
if one replaces the cube $\CC$ with its translate $\CC_n = \CC+n, n\in \Z^d$, then 
the bounds of Proposition \ref{prop:BS} still hold with implicit constants independent of $n$.

\section{Preliminary estimates}
\label{sect:prep}
   
\subsection{The weighted operator $\Psi$}
Represent the operator $\G$ as the product $\G = \Psi^*\Psi$,   
 where $\Psi: \plainL2(\R^3)\to\plainL2(\R^{3N-3})$ is defined by 
 \begin{align*}
 (\Psi u)(\hat\bx) = \int_{\R^3} \psi(\hat\bx, x) u(x) d x, u\in\plainL2(\R^3).
 \end{align*}
Since $\psi\in \plainL2(\R^{3N})$, this operator is Hilbert-Schmidt. 
As explained in the Introduction, in order to prove 
\eqref{eq:main} it suffices to show that $s_k(\Psi)\lesssim k^{-4/3}$, $k = 1, 2, \dots$, 
i.e. that $\Psi\in\BS_{3/4, \infty}$. 
For future use, we obtain an estimate for the operator $b \Psi a$ with weights $a$ and $b$. 
In order to describe these weights, denote $\CC_n = (0, 1)^3 + n$,\ $n\in\mathbb Z^3$. 
Let $\varkappa_l>0$ be the constants in the exponential bounds \eqref{eq:exp} and \eqref{eq:FS}. 
We assume that the weight  
$a\in\plainL2_{\textup{\tiny loc}}(\R^3)$ 
is such that 
\begin{align}\label{eq:Sq}
S_q^{(l)}(a) = \bigg[\sum_{n\in\mathbb Z^3} e^{- q\varkappa_{\scalel{l}}|n|_\scale}
\|a\|_{\plainL2(\CC_n)}^q\bigg]^{\frac{1}{q}}<\infty,\quad q = \frac{3}{4},
\end{align}
and that $b\in\plainL\infty(\R^{3N-3})$, so that 
\begin{align}\label{eq:mb}
M^{(l)}(b) = \biggl[\int_{\R^{3N-3}} 
|b(\hat\bx)|^2 e^{-2\varkappa_{\scalel{l}}|\hat\bx|_\scale} 
d\hat\bx\biggr]^{\frac{1}{2}}<\infty,\ \forall l = 1, 2, \dots.
\end{align}
Recall that the functional $\SfG_p$ is defined in \eqref{eq:limsupinf}. 
Our objective is to prove the following theorem. 

\begin{thm}\label{thm:psifull}
Let $b\in \plainL\infty(\R^{3N-3})$ and let $a\in\plainL2_{\textup{\tiny loc}}(\R^3)$ be such that  
$S_{3/4}^{(4)}(a)<\infty$.  
Then  $ b \Psi a\in \BS_{3/4, \infty}$ and 
\begin{align}
\|b\Psi a\|_{3/4, \infty}
\lesssim  &\ \|b\|_{\plainL\infty} S_{3/4}^{(3)}(a),\label{eq:psifullest}\\
 \SfG_{3/4}(b\Psi a)
\lesssim &\ \big( M^{(4)}(b) S_{3/4}^{(4)}(a)\big)^{\frac{3}{4}}.\label{eq:psifull}
\end{align}
\end{thm}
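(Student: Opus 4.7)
The strategy I would follow is to decompose $b\Psi a$ spatially in the $x$-variable, estimate each localized piece using the Birman--Solomyak bound of Proposition \ref{prop:BS} together with the derivative bounds of Proposition \ref{prop:FS}, and sum via the quasi-triangle inequality \eqref{eq:triangle} for \eqref{eq:psifullest} and via Lemma \ref{lem:triangleg} for \eqref{eq:psifull}.

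First I would write $a = \sum_{n\in\Z^3}a_n$ with $a_n = a\,\chi_{\CC_n}$, so that $b\Psi a = \sum_n T_n$ where $T_n = b\Psi a_n\colon \plainL2(\CC_n)\to \plainL2(\R^{3N-3})$ is an integral operator with kernel $b(\hat\bx)\psi(\hat\bx,x)a(x)$. For each $n$, Proposition \ref{prop:BS} yields $\|T_n\|_{q,\infty}\lesssim \big[\int|b(\hat\bx)|^2\|\psi(\hat\bx,\cdot)\|_{\plainH{l}(\CC_n)}^2\,d\hat\bx\big]^{1/2}\|a\|_{\plainL2(\CC_n)}$ with $1/q = 1/2 + l/3$. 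I would then estimate the $\plainH{l}(\CC_n)$ norm using \eqref{eq:FS1}: the factor $e^{-\varkappa_l|\bx|_1}$ splits as $e^{-\varkappa_l|\hat\bx|_1}e^{-\varkappa_l|n|_1}$ (since $|x|_1\geq |n|_1-O(1)$ on $\CC_n$), while the singular factor $\dc^{1-l}$ is $\plainL2(\CC_n)$-integrable provided $2l-2<3$, i.e.\ $l\leq 2$.

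The main obstacle is a gap in the regularity index: the target class $\BS_{3/4,\infty}$ corresponds to the fractional value $l=5/2$ in Proposition \ref{prop:BS}, whereas with integer $l$ one only reaches $\BS_{6/7,\infty}$ (for $l=2$) before the $\dc^{1-l}$ singularities render the $\plainH{l}(\CC_n)$ norm infinite. Closing this gap is precisely the purpose of the auxiliary estimates announced in Section \ref{sect:prep}; I would expect them to refine the per-cube bound by a further decomposition of $\CC_n$ into dyadic shells around the coalescence points $x_j$ ($j=0,\ldots,N-1$), applying Proposition \ref{prop:BS} with $l=3$ on each shell after rescaling to unit size. This should yield a per-cube bound of the form $\|T_n\|_{3/4,\infty}\lesssim \|b\|_{\plainL\infty}\,e^{-\varkappa_3|n|_1}\|a\|_{\plainL2(\CC_n)}$, the index $3$ on $\varkappa$ arising because at least three derivatives of $\psi$ are controlled in the process.

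Once such a per-piece estimate is in place, the quasi-triangle inequality \eqref{eq:triangle} applied with $p = 3/4\in(0,1)$ gives
\begin{align*}
\|b\Psi a\|_{3/4,\infty}^{3/4} \lesssim \sum_n \|T_n\|_{3/4,\infty}^{3/4} \lesssim \|b\|_{\plainL\infty}^{3/4}\sum_n e^{-(3/4)\varkappa_3|n|_1}\|a\|_{\plainL2(\CC_n)}^{3/4} = \|b\|_{\plainL\infty}^{3/4}\,\big(S_{3/4}^{(3)}(a)\big)^{3/4},
\end{align*}
which is \eqref{eq:psifullest}. For \eqref{eq:psifull} the argument would run in parallel with Lemma \ref{lem:triangleg} in place of \eqref{eq:triangle}; since $\SfG_{3/4}$ is insensitive to contributions in $\BS_{3/4,\infty}^\circ$ by \eqref{eq:inf}, a weaker per-piece estimate involving the weighted norm $M^{(4)}(b)$ and the exponent $\varkappa_4$ suffices (an additional derivative can be traded against the $\hat\bx$-integrability encoded in $M^{(4)}(b)$), producing the $S_{3/4}^{(4)}(a)$ dependence claimed.
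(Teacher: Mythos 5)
Your outer layer --- splitting $a=\sum_n a\1_{\CC_n}$, proving a per-cube bound with the factor $e^{-\varkappa|n|_1}\|a\|_{\plainL2(\CC_n)}$, and summing with \eqref{eq:triangle} for \eqref{eq:psifullest} and with Lemma \ref{lem:triangleg} for \eqref{eq:psifull} --- is exactly the paper's scheme, and that part of your argument is fine. But the entire difficulty of the theorem sits in the per-cube estimate $\|b\Psi_n a\|_{3/4,\infty}\lesssim e^{-\varkappa_3|n|_1}\|b\|_{\plainL\infty}\|a\|_{\plainL2(\CC_n)}$, which you correctly identify as a gap and then only gesture at (``dyadic shells around the coalescence points \ldots should yield''). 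As written this is not a proof, and the sketch you give omits two ideas without which it cannot be made to work. First, the derivative bound \eqref{eq:FS} holds only for $|m|_1\ge 1$: the function $\psi$ itself does \emph{not} vanish at a coalescence point, so on a shell of radius $r$ about $x_j$ the zeroth-order term does not scale favourably and a straightforward rescaled-shell application of Proposition \ref{prop:BS} does not close. The paper repairs this by subtracting the value at the coalescence point: it introduces the correction kernel $\eta^{(\d)}(\hat\bx,x)=\sum_j\t(|x-x_j|\d^{-1})\psi_1^{(\d)}(\hat\bx,x_j)$, estimates it separately (Lemma \ref{lem:etad}), and works with the difference $\psi_1^{(\d)}(\hat\bx,x)-\psi_1^{(\d)}(\hat\bx,x_j)$, which by \eqref{eq:m0} gains the crucial factor $|x-x_j|^{1-|m|_1}$ for \emph{all} $m$ including $m=0$. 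Second, this gain is only valid when the points $x_0,\dots,x_{N-1}$ are mutually separated; the paper first cuts out the set where some $|x_l-x_s|<4\d$ (the kernel $\psi_2^{(\d)}$, Lemma \ref{lem:psidel}), using the smallness in measure of that set in $\hat\bx$. Your proposal never separates this region, and without it the ``distance to the nearest singularity'' analysis near the $x_j$ breaks down.

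The quantitative mechanism is also different from what you propose. Rather than summing quasi-norms over dyadic shells (which at the critical exponent $q=3/4$ is delicate), the paper produces, for each $k$, two families of singular-value bounds: a low-regularity one ($l=2$, decay $k^{-7/6}$) whose constant is \emph{small}, of order $\varepsilon^{1/2}+\d^{3/2}$, coming from the pieces supported near the singularities; and a high-regularity one (any $l\ge 3$, decay $k^{-1/2-l/3}$) whose constant is \emph{large}, of order $\varepsilon^{-l+5/2}+\d^{-l+3/2}$, coming from the pieces supported away from them. It then combines these via \eqref{eq:2k} and chooses the cutoffs $k$-dependently, $\varepsilon_k=k^{-1/3}$ and $\d_k\sim k^{2/(3l)-1/3}$, which balances the two regimes and yields exactly $s_k\lesssim k^{-4/3}$; taking $l=3$ gives \eqref{eq:psifullest} and $l=4$ makes the $\d$-contribution $o(k^{-4/3})$, which is what isolates the $M^{(4)}(b)$ factor in \eqref{eq:psifull}. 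So the statement you need to supply, and have not, is precisely this interpolation between a small-constant/slow-decay bound and a large-constant/fast-decay bound with parameters optimized in $k$, built on the subtracted kernel.
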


For $a = 1$ and $b = 1$ this theorem implies that $s_k(\Psi)\lesssim k^{-4/3}$, and hence 
$\l_k(\G) = s_k(\Psi)^2\lesssim k^{-8/3}$, thereby proving Theorem \ref{thm:main}.

\vskip 0.2cm

The plan of the proof is as follows. We study first the operators $\Psi_n = \Psi\1_{\CC_n}$, $n\in\Z^3$.
For each fixed $n$ the operator $\Psi_n$ is split in the sum of several 
operators depending on two parameters: $\d>0$ and $\varepsilon>0$, 
whose singular values are estimated in different ways. 
None of these estimates is sharp, but in the end, when collecting all the estimates together in Sect. 
\ref{sect:proofs}, we get the sharp bound \eqref{eq:psifull} by making a clever choice of the parameters 
$\d$ and $\varepsilon$.

For convenience we introduce the notation $\iop(T):\plainL2(\R^3)\to \plainL2(\R^{3N-3})$  
for the integral operator with the kernel $T(\hat\bx, x)$. 
Whenever we consider the operators $b\iop(\ \cdot\ )\1_{\CC_n}a$ with weights $a, b$, 
the constants in all the bounds are independent on the weights or on the parameter $n\in\mathbb Z^3$.

Recall also that we use the notation $x_0 = 0$. The symbol $\sum_j$ (resp. $\prod_j$) 
assumes summation (resp. product) over all $j = 0, 1, \dots, N-1$.

\subsection{Partition of $\Psi_n$: step 1}
The first step is to estimate the contribution of the domain on which the variables $x_j$, $j=0, 1, 2, \dots, N-1,$ 
are close to each other. 
Fix a $\d>0$ 
 and denote 
\begin{align*}
\Om^{(\d)} = &\ \bigcap_{0\le l<s\le  N-1}\{\hat\bx\in \R^{3N-3}: |x_l-x_s|> 4\d\}.
\end{align*}
The indicator of this set is denoted by $\chi^{(\d)}$, i.e.
\begin{align}\label{eq:chidel}
\chi^{(\d)}(\hat\bx) =  \1_{\Om^{(\d)}}(\hat\bx) = \prod_{0\le l<s\le N-1}\1_{\{|x_l-x_s|>4\d\}}(\hat\bx).
\end{align}
Represent $\psi$ as follows:
\begin{align}\label{eq:split1}
\psi = &\ \psi_1^{(\d)} + \psi_2^{(\d)},\\
\psi_1^{(\d)}(\hat\bx, x) = &\ \psi(\hat\bx, x) 
\chi^{(\d)}(\hat\bx),\notag\\
 \psi_2^{(\d)}(\hat\bx, x)
 = & \ \psi(\hat\bx, x) - \psi_1^{(\d)}(\hat\bx, x)
 = \psi(\hat\bx, x)\big(1-\chi^{(\d)}(\hat\bx)\big).\notag
\end{align}
It follows from \eqref{eq:FS1} that 
 \begin{align}
|\p_x^m 
\psi^{(\d)}_2(\hat\bx, x)| 
\lesssim 
e^{-\varkappa_{|m|_\scale}|\bx|_\scale}\bigg(1+\sum_j |x-x_j|^{1-|m|_\scale}\bigg)
\sum_{0\le l<s\le N-1} \1_{\{|x_l-x_s|<4\d\}}(\hat\bx),\label{eq:psidel2}
 \end{align}
for all $m\in\mathbb N_0^3$, with an implicit constant independent of $\d>0$. 
The operator $\iop(\psi_{2}^{(\d)})$ is considered with the weight $b = 1$ and arbitrary 
$a\in\plainL2(\CC_n)$.

In the next lemma and further on we use the straightforward inequality
\begin{align}\label{eq:expon}
\max_{x\in \CC_n}e^{-\varkappa_{\scalel{l}}|\bx|_\scale}
\le e^{3\varkappa_{\scalel{l}}}e^{-\varkappa_{\scalel{l}}|\hat\bx|_\scale} e^{-\varkappa_{\scalel{l}}|n|_\scale}.
\end{align}

\begin{lem}\label{lem:psidel}
The operator $\iop(\psi_2^{(\d)}) a\1_{\CC_n}$ belongs to $\BS_{6/7, \infty}$ and 
\begin{align}\label{eq:psidelcube}
\| \iop(\psi^{(\d)}_2)\, a\, \1_{\CC_n}\|_{6/7, \infty}
\lesssim e^{-\varkappa_{\scalel{2}}|n|_\scale} \d^{\frac{3}{2}}\|a\|_{\plainL2(\CC_n)},
\end{align}
for all $n\in\mathbb Z^3$ and all $\d>0$.
\end{lem}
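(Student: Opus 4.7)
The singular-value exponent $6/7$ matches the Birman--Solomyak bound of Proposition \ref{prop:BS} with $d=3$ and $l=2$, since $\tfrac{1}{6/7} = \tfrac{1}{2}+\tfrac{2}{3}$ and $2l > d$. The plan is therefore to apply Proposition \ref{prop:BS} (with the translation-invariance remark following it) to the operator $\iop(\psi_2^{(\d)})\,a\,\1_{\CC_n}$, viewed as an integral operator from $\plainL2(\CC_n)$ into $\plainL2(\R^{3N-3})$ with kernel $T(\hat\bx,x) = \psi_2^{(\d)}(\hat\bx,x)$, trivial output weight $b=1$, and input weight $a$. This reduces the lemma to the estimate
\begin{align*}
\int_{\R^{3N-3}} \big\| \psi_2^{(\d)}(\hat\bx,\,\cdot\,)\big\|_{\plainH{2}(\CC_n)}^2\, d\hat\bx \lesssim e^{-2\varkappa_{\scalel{2}}|n|_\scale}\, \d^{3}.
\end{align*}

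\textbf{Pointwise bound on the $\plainH{2}$-norm.} Using the derivative estimate \eqref{eq:psidel2}, the monotonicity \eqref{eq:monotone} (which allows replacing $\varkappa_{|m|_\scale}$ by $\varkappa_2$ for $|m|_\scale\le 2$), and \eqref{eq:expon} on $\CC_n$, I bound for each multi-index $m$ with $|m|_\scale\le 2$:
\begin{align*}
|\p_x^m\psi_2^{(\d)}(\hat\bx,x)|^2 \lesssim e^{-2\varkappa_{\scalel{2}}|n|_\scale}\, e^{-2\varkappa_{\scalel{2}}|\hat\bx|_\scale} \bigg(1+\sum_{j} |x-x_j|^{1-|m|_\scale}\bigg)^2 \eta_\d(\hat\bx),
\end{align*}
where $\eta_\d(\hat\bx) = \sum_{0\le l<s\le N-1} \1_{\{|x_l-x_s|<4\d\}}(\hat\bx)$. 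The worst singularity $|x-x_j|^{-1}$ (at $|m|_\scale = 2$) is locally square-integrable in $\R^3$, and in fact $\int_{\CC_n}|x-x_j|^{-2}\,dx$ is bounded by an absolute constant, uniformly in $x_j$ and in $n$. Integrating in $x$ over $\CC_n$ and summing over $|m|_\scale\le 2$ therefore yields
\begin{align*}
\big\|\psi_2^{(\d)}(\hat\bx,\,\cdot\,)\big\|_{\plainH{2}(\CC_n)}^2 \lesssim e^{-2\varkappa_{\scalel{2}}|n|_\scale}\, e^{-2\varkappa_{\scalel{2}}|\hat\bx|_\scale}\, \eta_\d(\hat\bx).
\end{align*}

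\textbf{Production of the $\d^{3/2}$ factor.} The factor $\d^{3/2}$ is produced entirely by the coalescence indicator $\eta_\d$. For each pair $0\le l<s\le N-1$, a linear change of variables turning $x_s-x_l$ into one of the new coordinates of $\R^{3N-3}$ shows that
\begin{align*}
\int_{\R^{3N-3}} e^{-2\varkappa_{\scalel{2}}|\hat\bx|_\scale}\, \1_{\{|x_l-x_s|<4\d\}}(\hat\bx)\, d\hat\bx \lesssim \d^3,
\end{align*}
since the slab $\{|x_l-x_s|<4\d\}\subset\R^3$ has volume $\lesssim\d^3$ and the remaining coordinates carry an integrable exponential weight. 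Summing over the $O(N^2)$ pairs, integrating in $\hat\bx$, taking the square root, and plugging into Proposition \ref{prop:BS} gives \eqref{eq:psidelcube}. The main obstacle is the bookkeeping that ensures the uniformity in $\hat\bx$ (and in $n$) of the singular integral $\int_{\CC_n}|x-x_j|^{-2}dx$ and the correct accumulation of the $\d^3$ from the slab volumes; both are straightforward given the local $L^2$-integrability of $|x|^{-2}$ in $\R^3$ and the decoupling of the coalescence constraint from the exponential decay.
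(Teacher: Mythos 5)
Your proposal is correct and follows essentially the same route as the paper: apply Proposition \ref{prop:BS} with $d=3$, $l=2$, bound the $\plainH2(\CC_n)$-norm of $\psi_2^{(\d)}(\hat\bx,\cdot)$ via \eqref{eq:psidel2} and \eqref{eq:expon} (the singularity $|x-x_j|^{-2}$ being locally integrable uniformly in $n$), and extract $\d^3$ from integrating the coalescence indicators over $\hat\bx$. The only cosmetic quibble is that $\{|x_l-x_s|<4\d\}$ is a ball in the relative coordinate rather than a slab, but your volume bound $\lesssim\d^3$ is exactly right.
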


\begin{proof} 
According to \eqref{eq:psidel2} and \eqref{eq:expon}, 
$\psi^{(\d)}_2(\hat\bx, \ \cdot\ )\in \plainH2(\CC_n)$ 
for a.e. $\hat\bx\in\R^{3N-3}$ and 
\begin{align*}
e^{2\varkappa_{\scalel{2}}|\hat\bx|_\scale}\|\psi^{(\d)}_2(\hat\bx,\ \cdot\ )\|_{\plainH2}^2 
\lesssim &\ e^{-2\varkappa_{\scalel{2}}|n|_\scale}
\int_{\CC_n} \bigg(1+\sum_{0\le j\le N-1} |x-x_j|^{-2}\bigg) dx\\ 
&\ \qquad\qquad\qquad \times\sum_{0\le l<s\le N-1} 
 \1_{\{|x_l-x_s|<4\d\}}(\hat\bx) \\[0.2cm]
\lesssim &\ e^{-2\varkappa_{\scalel{2}} |n|_\scale} 
\sum_{0\le l<s\le N-1} 
 \1_{\{|x_l-x_s|<4\d\}}(\hat\bx).
\end{align*}   
Using Proposition \ref{prop:BS} with $l=2, d=3$ (so that $2l >d$), we get 
that the operator on the left-hand side of \eqref{eq:psidelcube} belongs 
to $\BS_{q, \infty}$ with $q = 6/7$ and 
\begin{align*}
\|\iop(\psi^{(\d)}_2)\, a\,\1_{\CC_n}\|_{6/7, \infty}
\lesssim &\  \bigg[\int_{\R^{3N-3}} 
\|\psi^{(\d)}_2(\hat\bx, \ \cdot\ )\|_{\plainH2}^2\,d\hat\bx \biggl]^{\frac{1}{2}}\, \|a\|_{\plainL2(\CC_n)}
\notag\\
\lesssim &\ e^{-\varkappa_{\scalel{2}} |n|_\scale}\bigg[\int_{\R^{3N-3}}  
e^{-2\varkappa_{\scalel{2}}|\hat\bx|_\scale} \sum_{0\le l < s\le N-1}\1_{\{|x_l-x_s|<4\d\}}(\hat\bx)\,
d\hat\bx 
\bigg]^{\frac{1}{2}}\,\|a\|_{\plainL2(\CC_n)} \notag\\
\lesssim &\ e^{-\varkappa_{\scalel{2}}|n|_\scale} \d^{\frac{3}{2}}\,\|a\|_{\plainL2(\CC_n)},
\end{align*}
which gives \eqref{eq:psidelcube}.
\end{proof}

To study the kernel $\psi_1^{(\d)}$, 
we separate the contribution from the values of $x$ that are ``far" from $x_j$'s, $j=0, 1, \dots, N-1$.
Let $\t\in\plainC\infty_0(\R)$ be a function such that $0\le \t\le 1$ and 
\begin{align*}
\t(t) = 0,\quad \textup{if}\quad |t|>2;\ \quad
\t(t) = 1,\quad \textup{if}\quad |t|<1. \ 
\end{align*} 
Denote $\z(t) = 1 - \t(t)$. Observe that for any $\nu >0$, 
\begin{align}
\big|\p_x^m \t\big(|x|\nu^{-1}\big)\big|
\lesssim &\  \1_{\{|x|<2\nu\}} + \nu^{-|m|_\scale} \1_{\{\nu<|x|<2\nu\}}
\lesssim 
\nu^{-|m|_\scale} \1_{\{|x|<2\nu\}},\label{eq:nu}\\
\big|\p_x^m \z\big(|x|\nu^{-1}\big)\big|
\lesssim  &\ \1_{\{|x|>\nu\}} + \nu^{-|m|_\scale} \1_{\{\nu <|x|<2\nu\}}
\lesssim \nu^{-|m|_\scale} \1_{\{|x|> \nu\}},\notag
\end{align}
for all $m\in \mathbb N_0^3$. Consequently, 
\begin{align}\label{eq:cutoff}
\big|\p_x^m \t\big(|x|\nu^{-1}\big)\big|
\lesssim  |x|^{-|m|_\scale}\1_{\{|x|<2\nu\}},\quad
\big|\p_x^m \z\big(|x|\nu^{-1}\big)\big|
\lesssim   |x|^{-|m|_\scale}\1_{\{|x|>\nu\}}, \quad m\in\mathbb N_0^3,
\end{align}
uniformly in $\nu >0$.

In what follows we consider separately 
the following components of $\psi_1^{(\d)}$:
\begin{align}\label{eq:split2}
\psi_1^{(\d)} = &\ \psi_{11}^{(\d)} + \psi_{12}^{(\d)},\\
\psi_{11}^{(\d)}(\hat\bx, x) = &\  
\sum_{j} \t\big(|x-x_j|\d^{-1}\big) \psi_1^{(\d)}(\hat\bx, x),\notag\\
\psi^{(\d)}_{12}(\hat\bx, x) = &\ 
\big[1 - \sum_{j} \t\big(|x-x_j|\d^{-1}\big)\big] \psi_1^{(\d)}(\hat\bx, x).\notag
\end{align}
%
In view of the definition of $\chi^{(\d)}$, see \eqref{eq:chidel}, we have
\begin{align*}
\big[1 - \sum_{j} \t\big(|x-x_j|\d^{-1}\big)\big] \chi^{(\d)}(\hat\bx)
= \prod_j \z\big(|x-x_j|\d^{-1}\big) \, \chi^{(\d)}(\hat\bx),
\end{align*} 
so that 
\begin{align*}
\psi_{12}^{(\d)}(\hat\bx, x) = \psi_1^{(\d)}(\hat\bx, x) \prod_j \z\big(|x-x_j|\d^{-1}\big). 
\end{align*}
Estimate the derivatives of this function. First observe that in view of \eqref{eq:cutoff} we have  
\begin{align*}
\big|\p_x^m\prod_j \z\big(|x-x_j|\d^{-1}\big)\big|
\lesssim \bigg(\sum_j |x-x_j|^{-|m|_\scale}\bigg)
\prod_j 
\1_{\{|x-x_j|>\d\}}(\hat\bx, x),\quad m\in\mathbb N_0^3. 
\end{align*}
Together with \eqref{eq:FS1} this gives
\begin{align}\label{eq:psi12der}
\big|\p_x^m\psi_{12}^{(\d)}(\hat\bx, x)\big|\lesssim e^{-\varkappa_{|m|_\scale}|\bx|_\scale} 
\sum_j |x-x_j|^{-|m|_\scale} \1_{\{|x-x_j|>\d\}}(\hat\bx, x)
,\ \quad m\in\mathbb N_0^3,
\end{align}
uniformly in $\d>0$.

\begin{lem}\label{lem:psi12d} 
For any $l\ge 2$ the operator $\iop(\psi_{12}^{(\d)})a\1_{\CC_n}$ belongs to $\BS_{q, \infty}$ with
\begin{align}\label{eq:q}
\frac{1}{q} = \frac{1}{2} + \frac{l}{3}, 
\end{align}
and 
\begin{align}\label{eq:psi12d}
\|\iop(\psi_{12}^{(\d)})\, a\, \1_{\CC_n}\|_{q, \infty}
\lesssim e^{-\varkappa_{\scalel{l}}|n|_\scale}\d^{-l+\frac{3}{2}}\|a\|_{\plainL2(\CC_n)},
\end{align}
for all $\d\in (0, \d_0]$, 
with an implicit constant depending on $l$ and $\d_0$ only.
\end{lem}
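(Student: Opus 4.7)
The plan is to apply Proposition \ref{prop:BS} directly with $b = 1$, since the kernel $\psi_{12}^{(\d)}$ is smooth enough on each cube $\CC_n$ thanks to the cut-off factors $\z(|x-x_j|\d^{-1})$ which keep $x$ away from the coalescence points $x_j$. I would feed in the regularity index $l \ge 2$ (so that the hypothesis $2l > d = 3$ is satisfied), the integer $n$ obtained from the Birman--Solomyak identity $q^{-1} = 1/2 + l/3$, and bound the resulting $\plainH l(\CC_n)$--norm of $\psi_{12}^{(\d)}(\hat\bx,\ \cdot\ )$ using the pointwise derivative estimate \eqref{eq:psi12der}.

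The key computation is to estimate, for any multi-index $m\in\mathbb N_0^3$ with $|m|_1 = l' \le l$, the integral
\begin{equation*}
\int_{\CC_n}|\p_x^m\psi_{12}^{(\d)}(\hat\bx,x)|^2\,dx
\lesssim e^{-2\varkappa_{\scalel{l}}|n|_\scale}e^{-2\varkappa_{\scalel{l}}|\hat\bx|_\scale}
\sum_{j}\int_{\{|x-x_j|>\d\}\cap \CC_n}|x-x_j|^{-2l'}\,dx.
\end{equation*}
Here I have combined \eqref{eq:psi12der} with the exponential bound \eqref{eq:expon} and used the monotonicity \eqref{eq:monotone} to replace every $\varkappa_{l'}$ by $\varkappa_l$. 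The radial integral is bounded by $1$ for $l' = 0, 1$ and by a multiple of $\d^{3-2l'}$ for $l' \ge 2$; since $l \ge 2$ and $\d \le \d_0$, the term $l' = l$ dominates and yields $\lesssim \d^{3-2l}$ (with a constant depending only on $l$ and $\d_0$). Summing over $|m|_1 \le l$ gives
\begin{equation*}
\|\psi_{12}^{(\d)}(\hat\bx,\ \cdot\ )\|_{\plainH l(\CC_n)}^2
\lesssim e^{-2\varkappa_{\scalel{l}}|n|_\scale}e^{-2\varkappa_{\scalel{l}}|\hat\bx|_\scale}\d^{3-2l}.
\end{equation*}

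Integrating in $\hat\bx$ produces a convergent Gaussian-type integral $\int_{\R^{3N-3}}e^{-2\varkappa_{\scalel{l}}|\hat\bx|_\scale}\,d\hat\bx < \infty$, so plugging into Proposition \ref{prop:BS} with $b = 1$ gives
\begin{equation*}
\|\iop(\psi_{12}^{(\d)})\, a\, \1_{\CC_n}\|_{q,\infty}
\lesssim e^{-\varkappa_{\scalel{l}}|n|_\scale}\d^{-l+\frac{3}{2}}\|a\|_{\plainL2(\CC_n)},
\end{equation*}
which is the claimed \eqref{eq:psi12d}. Uniformity in $n$ follows from the remark after Proposition \ref{prop:BS} (the Birman--Solomyak bound is invariant under translation of the cube).

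The main technical obstacle is simply the careful bookkeeping in the radial integral: one must verify that for $l' \ge 2$ the condition $|x-x_j|>\d$ is what produces the blow-up rate $\d^{3-2l'}$, while the outer truncation $x\in\CC_n$ controls the divergence at infinity that would otherwise occur for $l' \le 1$. Once this is in hand, matching the exponential factor $e^{-\varkappa_l|n|_1}$ to the right-hand side of \eqref{eq:psi12d} is immediate from \eqref{eq:expon}, and the choice of $l \ge 2$ (rather than $l = 1$) is exactly what makes the Birman--Solomyak index $q = (1/2 + l/3)^{-1}$ land below $1$ so that the quasi-norm triangle inequality \eqref{eq:triangle} will be usable when these estimates are assembled in Section \ref{sect:proofs}.
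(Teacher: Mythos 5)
Your proof is correct and follows essentially the same route as the paper: apply Proposition \ref{prop:BS} with $b=1$ and $d=3$, bound $\|\psi_{12}^{(\d)}(\hat\bx,\ \cdot\ )\|_{\plainH{l}(\CC_n)}^2$ via the derivative estimate \eqref{eq:psi12der} together with \eqref{eq:expon}, observe that the worst radial integral contributes $\d^{3-2l}$, and integrate the exponential weight in $\hat\bx$. The only quibbles are cosmetic (the ``$n$'' in Proposition \ref{prop:BS} is the dimension $3N-3$ of the target space, not something determined by $q$, and the $\hat\bx$-integral is exponential rather than Gaussian), neither of which affects the argument.
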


\begin{proof} 
According to \eqref{eq:psi12der}, 
$\psi_{12}^{(\d)}(\hat\bx, \ \cdot\ )\in \plainH{l}(\CC_n)$ 
for a.e. $\hat\bx\in\R^{3N-3}$ with an arbitrary $l\ge 1$ and for $l\ge 2$ we have  
\begin{align*}
e^{2\varkappa_{\scalel{l}}|\hat\bx|_\scale}\|\psi^{(\d)}_{12}(\hat\bx, &\ \ \cdot\ )\|_{\plainH{l}}^2 
\lesssim e^{-2\varkappa_{\scalel{l}}|n|_\scale}\int_{\CC_n} 
\bigg( 1 +
\sum_j |x-x_j|^{-2l} \1_{\{|x-x_j|>\d\}}(\hat\bx, x) \bigg) dx \\[0.2cm]
\lesssim &\  e^{-2\varkappa_{\scalel{l}} |n|_\scale}\big(1 + \d^{-2l+3}\big)
\lesssim  e^{-2\varkappa_{\scalel{l}} |n|_\scale}\d^{-2l+3}.
\end{align*} 
Now the bound 
\eqref{eq:psi12d} follows from Proposition \ref{prop:BS} with $d = 3$ and $b(\hat\bx) = 1$.  
\end{proof}

\subsection{Partition of $\Psi_n$: step 2}
It is important to note that the right-hand side of 
\eqref{eq:psi12der} contains the factor $|x-x_j|^{-|m|_\scale}$ instead of $|x-x_j|^{1-|m|_\scale}$ 
that is present 
in \eqref{eq:FS1}. This is a consequence of the fact that 
the bound \eqref{eq:FS} holds for $|m|_1\ge 1$, but not for $m = 0$. 
As we see later on, in spite of this loss of one power of $|x-x_j|$,  the estimate \eqref{eq:psi12d} is 
sufficient for derivation of the sharp bounds \eqref{eq:psifullest} and \eqref{eq:psifull}. 
However, when considering the term $\psi_{11}^{(\d)}$ in \eqref{eq:split2} 
the bound by $|x-x_j|^{-|m|_\scale}$ is not enough, and 
we need to have the factor $|x-x_j|^{1-|m|_\scale}$, just as in \eqref{eq:FS1}. To achieve this we 
have to ``correct" the kernel $\psi_{11}^{(\d)}$ with the help of the auxiliary kernel
\begin{align*}
\eta^{(\d)}(\hat\bx, x) = \sum_{j} \t\big(|x-x_j|\d^{-1}\big) \psi_1^{(\d)}(\hat\bx, x_j).
\end{align*}
As the next lemma shows, the kernel $\eta^{(\d)}$ has properties similar to those of $\psi_{12}^{(\d)}$. 

\begin{lem}\label{lem:etad}
For any $l\ge 2$ the operator $\iop(\eta^{(\d)})a\1_{\CC_n}$ belongs to $\BS_{q, \infty}$ 
with the parameter $q$ defined in \eqref{eq:q}, 
and 
\begin{align}\label{eq:etad}
\|\iop(\eta^{(\d)})\, a\,\1_{\CC_n}\|_{q, \infty}\lesssim e^{-\varkappa_{\scalel{l}}|n|_\scale}
\d^{-l+\frac{3}{2}}\|a\|_{\plainL2(\CC_n)},
\end{align}
for all $\d\in (0, \d_0]$.
\end{lem}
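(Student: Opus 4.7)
The plan is to follow the strategy of the proof of Lemma \ref{lem:psi12d}: bound $\|\eta^{(\d)}(\hat\bx,\ \cdot\ )\|_{\plainH{l}(\CC_n)}$ pointwise in $\hat\bx$, integrate against the exponential weight $e^{-2\varkappa_{\scalel{l}}|\hat\bx|_\scale}$, and then invoke Proposition \ref{prop:BS} with $d = 3$ and $b = 1$. The crucial simplification relative to $\psi_{12}^{(\d)}$ is that $\eta^{(\d)}$ depends on $x$ \emph{only} through the cut-offs $\t(|x-x_j|\d^{-1})$, since the factor $\psi_1^{(\d)}(\hat\bx, x_j)$ carries no $x$-dependence. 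Consequently every $\p_x^m$ lands on $\t$, and the first inequality in \eqref{eq:nu} produces the pointwise bound
\begin{equation*}
\bigl|\p_x^m\eta^{(\d)}(\hat\bx,x)\bigr|\lesssim \d^{-|m|_\scale}\sum_{j} \1_{\{|x-x_j|<2\d\}}(x)\,\bigl|\psi_1^{(\d)}(\hat\bx,x_j)\bigr|.
\end{equation*}

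The main obstacle is to extract the factor $e^{-\varkappa_{\scalel{l}}|n|_\scale}$ from the values $\psi_1^{(\d)}(\hat\bx, x_j)$, since these evaluations do not involve $n$ directly. Here I would exploit the observation that whenever the support condition $|x-x_j|<2\d$ is compatible with $x\in\CC_n$ one has $|x_j - n|\le C$, with $C$ depending only on $\d_0$. For $j\ge 1$ this forces $|x_j|_\scale \ge |n|_\scale - C$, so applying the exponential bound \eqref{eq:exp} to the full vector with $x_N$ replaced by $x_j$, using $\chi^{(\d)}\le 1$ and the monotonicity \eqref{eq:monotone}, gives $|\psi_1^{(\d)}(\hat\bx, x_j)|\lesssim e^{-\varkappa_{\scalel{l}}|\hat\bx|_\scale}e^{-\varkappa_{\scalel{l}}|n|_\scale}$. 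In the degenerate case $j = 0$ (with $x_0 = 0$) the same compatibility condition instead forces $|n|\le C$, so $e^{-\varkappa_{\scalel{l}}|n|_\scale}\gtrsim 1$ on that set and the factor can be inserted at the cost of a harmless constant.

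Combining the derivative estimate with the elementary volume bound $|\{x\in\CC_n:\,|x-x_j|<2\d\}|\lesssim \d^3$ and the decay bound above, I arrive at
\begin{equation*}
\|\eta^{(\d)}(\hat\bx,\ \cdot\ )\|_{\plainH{l}(\CC_n)}^2 \lesssim \d^{-2l+3}\,e^{-2\varkappa_{\scalel{l}}|\hat\bx|_\scale}\,e^{-2\varkappa_{\scalel{l}}|n|_\scale},
\end{equation*}
which is the direct analogue of the key intermediate estimate in Lemma \ref{lem:psi12d}. Proposition \ref{prop:BS}, applied with $l\ge 2$ (so that $2l>d=3$) and weight $b = 1$, together with the finite integral $\int_{\R^{3N-3}}e^{-2\varkappa_{\scalel{l}}|\hat\bx|_\scale}\,d\hat\bx<\infty$, then delivers the claimed bound \eqref{eq:etad} with the same exponent $q$ as in \eqref{eq:q}.
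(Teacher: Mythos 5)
Your proof is correct and follows essentially the same route as the paper's: every $x$-derivative falls on the cut-offs and is controlled by \eqref{eq:nu}, the $\plainH{l}(\CC_n)$-norm is bounded by $\d^{-l+3/2}e^{-\varkappa_{l}(|\hat\bx|_1+|n|_1)}$ using the volume $\lesssim\d^3$ of the supports, and Proposition \ref{prop:BS} with $d=3$, $b=1$ finishes the argument. Your explicit extraction of the factor $e^{-\varkappa_{l}|n|_1}$ from $\psi_1^{(\d)}(\hat\bx,x_j)$ — using that $x_j$ lies within $O(\d_0)$ of $\CC_n$ on the support of $\t(|x-x_j|\d^{-1})$, together with the separate treatment of the degenerate case $j=0$ — is in fact slightly more careful than the paper, which simply invokes \eqref{eq:expon} even though the evaluation point there is $x\in\CC_n$ rather than the nearby point $x_j$.
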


\begin{proof} 
Using \eqref{eq:expon} and \eqref{eq:nu} we get 
\begin{align*}
|\p_x^m\eta^{(\d)}(\hat\bx, x)|\lesssim 
\d^{-|m|_\scale} e^{-\varkappa_{\scalel{l}}|n|_\scale-\varkappa_{\scalel{l}}|\hat\bx|_\scale}
\sum_{j}\1_{\{|x-x_j|<2\d\}}(\hat\bx, x),\ m\in\mathbb N_0^3, \ |m|_1\le l.
\end{align*}
Therefore, $\eta^{(\d)}(\hat\bx, \ \cdot\ )\in \plainH{l}(\CC_n)$ 
for a.e. $\hat\bx\in\R^{3N-3}$ with an arbitrary $l \ge 1$ and for $l\ge 2$ we have
\begin{align*}
e^{2\varkappa_{\scalel{l}}|\hat\bx|_\scale}\|\eta^{(\d)}(\hat\bx, &\ \ \cdot\ )\|_{\plainH{l}}^2 
\lesssim e^{-2\varkappa_{\scalel{l}} |n|_\scale}  \int_{\CC_n} 
\big( 1 + \d^{-2l}\sum_j \1_{\{|x-x_j|<2\d\}}(\hat\bx, x) \big) dx \\[0.2cm]
\lesssim &\ e^{-2\varkappa_{\scalel{l}}|n|_\scale}\big(1 + \d^{-2l+3}\big)
\lesssim  e^{-2\varkappa_{\scalel{l}} |n|_\scale}\d^{-2l+3}.
\end{align*} 
Now the required bound follows from Proposition \ref{prop:BS} with $d = 3$ and $b(\hat\bx) = 1$.  
\end{proof}

Let us now investigate the ``corrected" kernel $\psi_{11}^{(\d)}$, and consider instead of it 
the kernel 
\begin{align}\label{eq:split3}
\phi^{(\d)} = &\ \psi_{11}^{(\d)} - \eta^{(\d)} 
= \sum_{j} \phi_j^{(\d)},\\
\phi_j^{(\d)}(\hat\bx, x) = &\ \t\big(|x-x_j|\d^{-1}\big)\big(
\psi_1^{(\d)}(\hat\bx, x) - \psi_1^{(\d)}(\hat\bx, x_j)
\big). \notag
\end{align} 
Before proceeding to the next step of the construction, we estimate the difference 
$\psi_1^{(\d)}(\hat\bx, x) - \psi_1^{(\d)}(\hat\bx, x_j)$. 
It follows from \eqref{eq:FS} with $|m|_1=1$ that 
\begin{align}\label{eq:m01}
\big|\psi_1^{(\d)}(\hat\bx, x) - \psi_1^{(\d)}(\hat\bx, x_j)\big|
\le &\ |x-x_j| 
\max_{t\in[0,1]} |\nabla_x\psi_1^{(\d)}(\hat\bx, tx_j + (1-t) x)|\notag\\
\lesssim &\ |x-x_j|\, e^{-\varkappa_\scalel{1}|\bx|_\scale}\chi^{(\d)}(\hat\bx).
\end{align}
In order to estimate the derivatives of this difference, we make the following observation. 
By the definition of $\t$, we have $|x-x_j|<2\d$ on the support of $\phi_j^{(\d)}$. Furthermore, 
the balls $\{x\in\R^3: |x-x_j|< 2\d\}\subset\R^3$, $j=0, 1, \dots, N-1$, are pairwise disjoint since $\hat\bx\in\Om^{(\d)}$.
As a consequence, 
\begin{align*}
\dc(\hat\bx, x) = |x-x_j|,\quad \textup{if} \quad |x-x_j|<2\d,\ \hat\bx\in\Om^{(\d)}.
\end{align*}
Consequently, the bound \eqref{eq:FS} together with \eqref{eq:m01} lead to 
\begin{align}\label{eq:m0}
\big|\p_x^m\big(\psi_1^{(\d)}(\hat\bx, x) - &\ \psi_1^{(\d)}(\hat\bx, x_j)\big)\big|\notag\\
&\ \lesssim |x-x_j|^{1-|m|_\scale} e^{-\varkappa_{|m|_\scale}|\bx|_\scale}\chi^{(\d)}(\hat\bx), 
\quad \textup{if} \quad |x-x_j|<2\d,
\end{align}
for \underline{all} $m\in\mathbb N_0^3$. 
Here we have also used our convention that $\varkappa_0 = \varkappa_1$, see \eqref{eq:monotone}. 

Now return to the functions $\phi_j^{(\d)}$, 
see \eqref{eq:split3}. 
The $\phi_j^{(\d)}(\hat\bx, x)$ is again partitioned in the sum of two new kernels. 
At this (last) stage of the partition we introduce a new parameter $\varepsilon\le\d/2$. 
With this choice of $\varepsilon$ we have $\t(t\varepsilon^{-1}) = \t(t\varepsilon^{-1})\t(t\d^{-1})$, 
so that 
\begin{align*}
\phi_j^{(\d)} = \xi_j^{(\d, \varepsilon)} + \b_j^{(\d, \varepsilon)},\quad 
j = 0, 1, 2, \dots, N-1, 
\end{align*}
with
 \begin{align*}
\xi_j^{(\d, \varepsilon)}(\hat\bx, x) = &\ \t\big(|x-x_j|\varepsilon^{-1}\big)
\big(\psi_1^{(\d)}(\hat\bx, x) - \psi_1^{(\d)}(\hat\bx, x_j) 
\big),\\
\b_{j}^{(\d, \varepsilon)}(\hat\bx, x)
= &\ \t\big(|x-x_j|\d^{-1}\big)
\z\big(|x-x_j|\varepsilon^{-1}\big)
\big(\psi_1^{(\d)}(\hat\bx, x) - \psi_1^{(\d)}(\hat\bx, x_j) 
\big).
\end{align*} 
Therefore 
\begin{align}\label{eq:phidelsplit}
\phi^{(\d)}  = &\ \xi^{(\d, \varepsilon)} + \b^{(\d, \varepsilon)}, \quad \textup{where}\\
\xi^{(\d, \varepsilon)}  = &\ \sum_j\xi_j^{(\d, \varepsilon)},\quad 
\b^{(\d, \varepsilon)} = \sum_j \b_j^{(\d, \varepsilon)}.\notag
\end{align}
In the next lemma we introduce a weight $b\in\plainL\infty(\R^{3N-3})$. Recall 
that under this condition 
the integral $M^{(l)}(b)$ defined in \eqref{eq:mb} is finite for all $l\ge 1$. 

\begin{lem} \label{lem:xi}
Let $a\in\plainL2(\CC_n)$ and $b\in\plainL\infty(\R^{3N-3})$. Then 
$b\,\iop(\xi^{(\d, \varepsilon)})\, a\1_{\CC_n}\in \BS_{6/7, \infty}$ and 
\begin{align}\label{eq:phi11de}
\|b\,\iop(\xi^{(\d, \varepsilon)})\, a\1_{\CC_n}\|_{6/7, \infty}
\lesssim  e^{-\varkappa_{\scalel{2}}|n|_\scale}\varepsilon^{\frac{1}{2}} M^{(2)}(b) \|a\|_{\plainL2(\CC_n)},
\end{align}
for all $\varepsilon\in (0, 1]$ and $\d \in [2\varepsilon, 2]$. 
\end{lem}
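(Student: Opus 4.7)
The plan is to estimate each summand $\iop(\xi_j^{(\d,\varepsilon)})$ separately via Proposition \ref{prop:BS} with $l=2$, $d=3$ (so that $2l>d$ and the resulting class is $\BS_{6/7,\infty}$), and then recombine them using the quasi-triangle inequality \eqref{eq:triangle}. Since the sum over $j$ has only $N$ terms, this last step costs only a universal constant.

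First, I would derive pointwise bounds on $\p_x^m \xi_j^{(\d,\varepsilon)}$ for $|m|_1\le 2$. Applying the Leibniz rule to the product defining $\xi_j^{(\d,\varepsilon)}$ and combining the cutoff estimate \eqref{eq:cutoff} with the key difference bound \eqref{eq:m0}, I claim
\begin{align*}
\big|\p_x^m \xi_j^{(\d,\varepsilon)}(\hat\bx, x)\big|
\lesssim  e^{-\varkappa_{|m|_\scale}|\bx|_\scale} |x-x_j|^{1-|m|_\scale} \1_{\{|x-x_j|<2\varepsilon\}}\chi^{(\d)}(\hat\bx).
\end{align*}
The point to verify here is that the support condition $|x-x_j|<2\varepsilon$ coming from $\t(|x-x_j|\varepsilon^{-1})$ is contained in $|x-x_j|<2\d$ (as $\varepsilon\le\d/2$), so that \eqref{eq:m0} is applicable on that support.

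Second, using the monotonicity \eqref{eq:monotone} to bound all the exponentials uniformly by $e^{-\varkappa_{\scalel{2}}|\bx|_\scale}$, together with \eqref{eq:expon}, I would compute the $\plainH2(\CC_n)$-norm. The contributions of $|m|_1=0,1,2$ produce integrals of $|x-x_j|^2$, $1$, $|x-x_j|^{-2}$ over $\{|x-x_j|<2\varepsilon\}\cap\CC_n$, of orders $\varepsilon^5$, $\varepsilon^3$, $\varepsilon$ respectively. For $\varepsilon\in(0,1]$ the $\varepsilon$ term dominates, yielding
\begin{align*}
e^{2\varkappa_{\scalel{2}}|\hat\bx|_\scale}\big\|\xi_j^{(\d,\varepsilon)}(\hat\bx,\ \cdot\ )\big\|_{\plainH2(\CC_n)}^2
\lesssim  e^{-2\varkappa_{\scalel{2}}|n|_\scale}\varepsilon.
\end{align*}
Inserting this into Proposition \ref{prop:BS} with the weight $b$ gives
\begin{align*}
\|b\,\iop(\xi_j^{(\d,\varepsilon)})\,a\1_{\CC_n}\|_{6/7,\infty}
\lesssim e^{-\varkappa_{\scalel{2}}|n|_\scale}\varepsilon^{\frac{1}{2}} M^{(2)}(b)\|a\|_{\plainL2(\CC_n)}.
\end{align*}
Summing over $j=0,1,\dots,N-1$ by \eqref{eq:triangle} (with $p=6/7<1$ and only $N$ summands) yields \eqref{eq:phi11de}.

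I do not expect any deep obstacle here; the delicate point is really the derivative bound in the first step. The whole argument hinges on the fact that the cancellation $\psi_1^{(\d)}(\hat\bx, x) - \psi_1^{(\d)}(\hat\bx, x_j)$ extends \eqref{eq:FS} to $m=0$, supplying the extra factor $|x-x_j|$ that is crucial for the integrand $|x-x_j|^{-2}$ (rather than $|x-x_j|^{-4}$) to appear at top order, thereby producing the gain $\varepsilon^{1/2}$ rather than a negative power of $\varepsilon$.
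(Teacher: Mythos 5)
Your proposal is correct and follows essentially the same route as the paper: the same reduction to each $\xi_j^{(\d,\varepsilon)}$ via \eqref{eq:triangle}, the same Leibniz-type combination of \eqref{eq:cutoff} with \eqref{eq:m0} to get the pointwise bound $|x-x_j|^{1-|m|_\scale}\1_{\{|x-x_j|<2\varepsilon\}}$, the same $\plainH2$-norm computation giving the factor $\varepsilon$, and the same application of Proposition \ref{prop:BS} with $l=2$, $d=3$. Your remark that the cancellation in $\psi_1^{(\d)}(\hat\bx,x)-\psi_1^{(\d)}(\hat\bx,x_j)$ is what supplies the crucial extra power of $|x-x_j|$ is exactly the point the paper makes when introducing the correction $\eta^{(\d)}$.
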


\begin{proof} 
According to \eqref{eq:triangle}, it suffices to prove \eqref{eq:phi11de} for each $j = 0, 1, \dots, N-1,$ 
 individually. 
It follows from \eqref{eq:cutoff} that 
\begin{align*}
\big|\p_x^m\t(|x-x_j|\varepsilon^{-1})\big|\lesssim |x-x_j|^{-|m|_\scale}\1_{\{|x-x_j|<2\varepsilon\}}(\hat\bx, x),
\end{align*}
uniformly in $\varepsilon>0, \d> 2\varepsilon$,  
for all $m\in\mathbb N_0^3$. Together with \eqref{eq:m0} this implies that
\begin{align*}
|\p_x^m 
\xi_j^{(\d, \varepsilon)}(\hat\bx, x)| 
\lesssim |x-x_j|^{1-|m|_\scale} e^{-\varkappa_{\scalel{l}}|\bx|_\scale}
\1_{\{|x-x_j|< 2\varepsilon\}}(\hat\bx, x),
\end{align*}
for all $m\in\mathbb N_0^3$, $|m|_1\le l$.
Thus $\xi_j^{(\d, \varepsilon)}( \hat\bx,\ \cdot\ )\in \plainH2(\CC_n)$ 
for a.e. $\hat\bx\in\R^{3N-3}$ and 
\begin{align*}
e^{2\varkappa_{\scalel{2}}|\hat\bx|_\scale}\|\xi_j^{(\d, \varepsilon)}( \hat\bx &,\ \cdot\ )\|_{\plainH2}^2 
\lesssim e^{-2\varkappa_{\scalel{2}} |n|_\scale}\int_{\CC_n}
\big(1+ |x-x_j|^{-2}\big) \1_{\{|x-x_j|<2\varepsilon\}}
 dx \\[0.2cm]
\lesssim &\ e^{-2\varkappa_{\scalel{2}} |n|_\scale} (\varepsilon^3 + \varepsilon)
\lesssim e^{-2\varkappa_{\scalel{2}} |n|_\scale} \varepsilon.
\end{align*}  
It follows from Proposition \ref{prop:BS} with $l=2, d=3$ that 
$b\iop(\xi_j^{(\d, \varepsilon)}) a \1_{\CC_n}\in \BS_{6/7, \infty}$ and
\begin{align*}
\|b\iop(\xi_j^{(\d, \varepsilon)})a\,\1_{\CC_n}\|_{6/7, \infty}
\lesssim &\ e^{-\varkappa_{\scalel{2}} |n|_\scale}\bigg[\int_{\R^{3N-3}} |b(\hat\bx)|^2
\|\xi_j^{(\d, \varepsilon)}(\hat\bx,\ \cdot\ )\|_{\plainH2}^2 e^{-2\varkappa_{\scalel{2}} |\hat\bx|_\scale}
d\hat\bx \bigg]^{\frac{1}{2}} \|a\|_{\plainL2(\CC_n)}\\
\lesssim &\ e^{-\varkappa_{\scalel{2}} |n|_\scale} \varepsilon^{\frac{1}{2}}\, M^{(2)}(b)\,\|a\|_{\plainL2(\CC_n)}.  
\end{align*} 
This completes the proof of \eqref{eq:phi11de}. 
\end{proof}

\begin{lem} \label{lem:beta}
Let $a\in\plainL2(\CC_n)$ and $b\in \plainL\infty(\R^{3N-3})$. Then  
for any $l\ge 3$ the operator   
$b\, \iop(\b^{(\d, \varepsilon)}) a\1_{\CC_n}$ belongs to $\BS_{q, \infty}$ with 
the parameter $q$ defined in \eqref{eq:q}, and 
\begin{align}\label{eq:phi12de}
\| b\, \iop(\b^{(\d, \varepsilon)})\, a \1_{\CC_n} \|_{q, \infty}
\lesssim e^{-\varkappa_{\scalel{l}} |n|_\scale}
\varepsilon^{-l+\frac{5}{2}} M^{(l)}(b)\|a\|_{\plainL2(\CC_n)},
\end{align} 
for all $\varepsilon\in (0, 1]$ and $\d \in [2\varepsilon, 2]$. 
\end{lem}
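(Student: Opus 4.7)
The plan is to run the same scheme as in Lemmas \ref{lem:psi12d}, \ref{lem:etad} and \ref{lem:xi}: reduce to each $\b_j^{(\d,\varepsilon)}$ separately, derive a pointwise bound on $\p_x^m\b_j^{(\d,\varepsilon)}$ for $|m|_1\le l$, estimate the $\plainH{l}(\CC_n)$-norm, and then apply Proposition \ref{prop:BS}. By the quasi-triangle inequality \eqref{eq:triangle} it suffices to prove \eqref{eq:phi12de} with $\b^{(\d,\varepsilon)}$ replaced by a single $\b_j^{(\d,\varepsilon)}$, $j=0,1,\dots,N-1$; since the number of terms is fixed, this only changes the implicit constant.

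The key pointwise estimate comes from Leibniz: on the support of $\b_j^{(\d,\varepsilon)}$ we have $\hat\bx\in\Om^{(\d)}$ and $\varepsilon<|x-x_j|<2\d$. On this set, the two cut-offs satisfy, by \eqref{eq:cutoff},
\begin{align*}
|\p_x^{m_1}\t(|x-x_j|\d^{-1})|+|\p_x^{m_2}\z(|x-x_j|\varepsilon^{-1})|\lesssim |x-x_j|^{-|m_1|_1-|m_2|_1},
\end{align*}
and for the difference $\psi_1^{(\d)}(\hat\bx,x)-\psi_1^{(\d)}(\hat\bx,x_j)$ the bound \eqref{eq:m0} applies (note $|x-x_j|<2\d$ is exactly the hypothesis of \eqref{eq:m0}). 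Combining the three factors via the Leibniz rule and collecting powers, together with the monotonicity \eqref{eq:monotone} and the elementary inequality \eqref{eq:expon}, I expect to obtain
\begin{align*}
|\p_x^m\b_j^{(\d,\varepsilon)}(\hat\bx,x)|\lesssim e^{-\varkappa_{\scalel{l}}|n|_\scale}e^{-\varkappa_{\scalel{l}}|\hat\bx|_\scale}|x-x_j|^{1-|m|_1}\1_{\{\varepsilon<|x-x_j|<2\d\}}(\hat\bx,x),
\end{align*}
uniformly in $\d\in[2\varepsilon,2]$ and $\varepsilon\in(0,1]$, for all $m\in\mathbb N_0^3$ with $|m|_1\le l$.

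Squaring, summing over $|m|_1\le l$ and integrating over $x\in\CC_n$, the integrand is controlled (in radial coordinates around $x_j$) by $r^{2-2l}r^2=r^{4-2l}$ on $(\varepsilon,2\d)$; for $l\ge 3$ this exponent is $\le -2$, so the integral is dominated by its lower endpoint and yields $\lesssim \varepsilon^{5-2l}$. Thus
\begin{align*}
e^{2\varkappa_{\scalel{l}}|\hat\bx|_\scale}\|\b_j^{(\d,\varepsilon)}(\hat\bx,\ \cdot\ )\|_{\plainH{l}}^2\lesssim e^{-2\varkappa_{\scalel{l}}|n|_\scale}\varepsilon^{5-2l}.
\end{align*}
Feeding this into Proposition \ref{prop:BS} with $d=3$ and the chosen $l\ge 3$ (so that $1/q=1/2+l/3$ as in \eqref{eq:q}), and using the weight $b\in\plainL\infty(\R^{3N-3})$ to form $M^{(l)}(b)$ as in \eqref{eq:mb}, produces
\begin{align*}
\|b\,\iop(\b_j^{(\d,\varepsilon)})\,a\,\1_{\CC_n}\|_{q,\infty}\lesssim e^{-\varkappa_{\scalel{l}}|n|_\scale}\varepsilon^{-l+\frac{5}{2}}\,M^{(l)}(b)\,\|a\|_{\plainL2(\CC_n)},
\end{align*}
which is \eqref{eq:phi12de}. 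The only delicate step is the pointwise derivative bound: the main obstacle is confirming that the Leibniz expansion of the triple product really preserves one full factor of $|x-x_j|$ (i.e.\ the exponent $1-|m|_1$ rather than $-|m|_1$), and this is precisely where the ``correction'' $\eta^{(\d)}$ built into $\phi^{(\d)}$, together with \eqref{eq:FS} for $|m|_1=1$ giving \eqref{eq:m0}, is used in an essential way.
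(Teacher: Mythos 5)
Your proposal is correct and follows essentially the same route as the paper: reduce to a single $\b_j^{(\d,\varepsilon)}$ via \eqref{eq:triangle}, combine \eqref{eq:cutoff} with \eqref{eq:m0} to get the pointwise bound $|x-x_j|^{1-|m|_1}$ times the indicator of $\{|x-x_j|>\varepsilon\}$, integrate to obtain the $\plainH{l}$-bound $\varepsilon^{5-2l}$, and apply Proposition \ref{prop:BS}. The radial computation and the role of the correction $\eta^{(\d)}$ in preserving the full power $1-|m|_1$ are exactly as in the paper.
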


\begin{proof} 
As in the previous lemma, due to \eqref{eq:triangle}, it suffices to prove 
\eqref{eq:phi12de} for each $j = 0, 1, \dots, N-1,$ individually. 
It follows from \eqref{eq:cutoff} that 
\begin{align*}
\big|\p_x^m\big(\t(|x-x_j|\d^{-1})\z(|x-x_j|\varepsilon^{-1})\big)\big|\lesssim |x-x_j|^{-|m|_\scale}
\1_{\{\varepsilon<|x-x_j|<2\d\}}(\hat\bx, x),
\end{align*}
uniformly in $\varepsilon>0, \d> 2\varepsilon$,  
for all $m\in\mathbb N_0^3$. Together with \eqref{eq:m0} this implies that
\begin{align*}
|\p_x^m \b_j^{(\d, \varepsilon)}(\hat\bx, x)|
\lesssim |x-x_j|^{1-|m|_\scale} e^{-\varkappa_{\scalel{l}}|\bx|_\scale}
\1_{\{|x-x_j|> \varepsilon\}}(\hat\bx, x),
\end{align*}
for all $m\in\mathbb N_0^3$, $|m|_1\le l$.
Thus $\b_j^{(\d, \varepsilon)}( \hat\bx,\ \cdot\ )\in \plainH{l}(\CC_n)$ 
for a.e. $\hat\bx\in\R^{3N-3}$ with an arbitrary $l\ge 1$ and for $l\ge 3$ we have 
\begin{align*}
e^{2\varkappa_{\scalel{l}}|\hat\bx|_\scale}\|\b_j^{(\d, \varepsilon)}( \hat\bx &,\ \cdot\ )\|_{\plainH{l}}^2 
\lesssim
e^{-2\varkappa_{\scalel{l}}|n|_\scale}
 \int_{\CC_n}
\big(1+ |x-x_j|^{2-2l}\big) \1_{\{|x-x_j|>\varepsilon\}}
 dx \\[0.2cm]
\lesssim &\ e^{-2\varkappa_{\scalel{l}} |n|_\scale} (1+ \varepsilon^{5-2l})
\lesssim e^{-2\varkappa_{\scalel{l}} |n|_\scale}\varepsilon^{5-2l}.
\end{align*} 
Using Proposition \ref{prop:BS}  with $d = 3$ and arbitrary $l\ge 3$, we get
that $b\,\iop(\b_j^{(\d, \varepsilon)})a\1_{\CC_n}\in \BS_{q, \infty}$ and
\begin{align*}
\| b\,\iop(\b_j^{(\d, \varepsilon)}) a\1_{\CC_n}\|_{q, \infty}
\lesssim &\ e^{-\varkappa_{\scalel{l}}|n|_\scale}\bigg[\int_{\R^{3N-3}} |b(\hat\bx)|^2
\|\b_j^{(\d, \varepsilon)}(\hat\bx,\ \cdot\ )\|_{\plainH{l}}^2 e^{-2\varkappa_{\scalel{l}}|\hat\bx|_\scale}
d\hat\bx \bigg]^{\frac{1}{2}} \|a\|_{\plainL2(\CC_n)}\\
\lesssim &\ e^{-\varkappa_{\scalel{l}}|n|_\scale} 
\varepsilon^{-l+\frac{5}{2}}\,M^{(l)}(b)\,\|a\|_{\plainL2(\CC_n)}.  
\end{align*}
This completes the proof of \eqref{eq:phi12de}. 
\end{proof}

\section{Proof of Theorems \ref{thm:psifull} and \ref{thm:main}}\label{sect:proofs}

Her we put together the estimates obtained in the previous section to complete the proof 
of Theorem \ref{thm:psifull}. Recall again that the quantities $S^{(l)}_q(a)$  and $M^{(l)}(b)$  
are defined in \eqref{eq:Sq} and \eqref{eq:mb} respectively.

\begin{lem} 
Suppose that $b\in\plainL\infty(\R^{3N-3})$ 
and $a\in\plainL2(\CC_n)$.
Then  $ b\Psi_n a\in \BS_{3/4, \infty}$ and 
\begin{align}
\| b\Psi_n a\|_{3/4, \infty} \lesssim &\ e^{-\varkappa_{\scalel{3}}|n|_{\scale}}
\|b\|_{\plainL\infty} \|a\|_{\plainL2(\CC)},
\label{eq:psinorm}\\
 \SfG_{3/4}(b\Psi_n a)
\lesssim  &\ \big(e^{-\varkappa_{\scalel{4}} |n|_\scale}M^{(4)}(b)\|a\|_{\plainL2(\CC_n)}\big)^{\frac{3}{4}},
\label{eq:psin}
\end{align}
for all $n\in \mathbb Z^3$.
\end{lem}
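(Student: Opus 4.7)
The proof combines the five kernel estimates of Section \ref{sect:prep}. Writing $\psi = \psi_2^{(\d)} + \psi_{12}^{(\d)} + \eta^{(\d)} + \xi^{(\d,\varepsilon)} + \b^{(\d,\varepsilon)}$ and letting $T_1,\ldots,T_5$ denote the corresponding weighted integral operators $b\,\iop(\ \cdot\ )\,a\,\1_{\CC_n}$, the plan is to apply the singular-value inequality \eqref{eq:2k} in its $N$-term form, $s_{5k}(T_1+\cdots+T_5)\le \sum_i s_k(T_i)$, with the parameters $\d=\d(k)$ and $\varepsilon=\varepsilon(k)$ chosen to depend on the singular-value index $k$. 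The pieces $T_1,T_4$ lie only in $\BS_{6/7,\infty}$, whereas $T_2,T_3,T_5$ can be placed in a smaller class $\BS_{q_l,\infty}$, $1/q_l = 1/2+l/3$, by increasing $l$ in Lemmas \ref{lem:psi12d}, \ref{lem:etad}, \ref{lem:beta}; a $k$-dependent balance of these two rates produces the target decay $k^{-4/3}$.

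For \eqref{eq:psinorm} I would take $l=3$ throughout. The bounds of Lemmas \ref{lem:psidel}--\ref{lem:beta} then give
$s_k(T_1)\lesssim e^{-\varkappa_{\scalel{2}}|n|_\scale}\|b\|_{\plainL\infty}\|a\|_{\plainL2(\CC_n)}\,\d^{3/2}k^{-7/6}$ and $s_k(T_j)\lesssim e^{-\varkappa_{\scalel{3}}|n|_\scale}\|b\|_{\plainL\infty}\|a\|_{\plainL2(\CC_n)}\,\d^{-3/2}k^{-3/2}$ for $j=2,3$, with analogous $\varepsilon$-bounds for $T_4,T_5$ (after $M^{(l)}(b)\lesssim\|b\|_{\plainL\infty}$). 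Balancing forces $\d\sim k^{-1/9}$ and $\varepsilon\sim k^{-1/3}$; every summand becomes $\lesssim k^{-4/3}$ and the exponential coefficients combine as $e^{-(\varkappa_{\scalel{2}}+\varkappa_{\scalel{3}})|n|_\scale/2}\le e^{-\varkappa_{\scalel{3}}|n|_\scale}$ thanks to \eqref{eq:monotone}. For the finitely many small $k$ at which the chosen $\d,\varepsilon$ fall outside the admissible ranges of Lemmas \ref{lem:xi}--\ref{lem:beta}, the trivial estimate $s_k(b\Psi_n a)\le\|b\|_{\plainL\infty}\|\Psi_n\|_{\mathrm{HS}}\|a\|_{\plainL2(\CC_n)}$ combined with the Hilbert--Schmidt bound $\|\Psi_n\|_{\mathrm{HS}}\lesssim e^{-\varkappa_{\scalel{0}}|n|_\scale}$ coming from \eqref{eq:exp} closes the uniform estimate $\sup_k k^{4/3}s_k(b\Psi_n a)\lesssim e^{-\varkappa_{\scalel{3}}|n|_\scale}\|b\|_{\plainL\infty}\|a\|_{\plainL2(\CC_n)}$.

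For \eqref{eq:psin} I would repeat the procedure but use $l=4$ in Lemmas \ref{lem:psi12d}, \ref{lem:etad}, \ref{lem:beta} and track the weight through $M^{(l)}(b)$ rather than $\|b\|_{\plainL\infty}$. With $\d\sim k^{-1/6}$, $\varepsilon\sim k^{-1/3}$ the pieces $T_1,T_2,T_3$ now decay at the strictly faster rate $k^{-17/12}$, so $k^{4/3}s_k(T_i)\to 0$ for $i=1,2,3$ and these terms drop out of $\limsup_m m^{4/3}s_m(b\Psi_n a)$. Only $T_4$ and $T_5$ survive at the critical rate $k^{-4/3}$, with coefficients $\lesssim e^{-\varkappa_{\scalel{2}}|n|_\scale}M^{(2)}(b)\|a\|_{\plainL2(\CC_n)}$ and $\lesssim e^{-\varkappa_{\scalel{4}}|n|_\scale}M^{(4)}(b)\|a\|_{\plainL2(\CC_n)}$ respectively; both are dominated by $e^{-\varkappa_{\scalel{4}}|n|_\scale}M^{(4)}(b)\|a\|_{\plainL2(\CC_n)}$ via \eqref{eq:monotone} and the induced monotonicity of $M^{(l)}$. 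Since $\SfG_{3/4}(T)\le(\limsup_m m^{4/3}s_m(T))^{3/4}$ by definition, this yields \eqref{eq:psin}.

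The main conceptual obstacle is that the pieces $T_1$ and $T_4$ belong only to $\BS_{6/7,\infty}\not\subset\BS_{3/4,\infty}$, so the quasi-triangle inequality \eqref{eq:triangle} is not directly available in $\BS_{3/4,\infty}$; coupling the $s$-value inequality \eqref{eq:2k} with $k$-dependent parameters is precisely the device that synthesizes $\BS_{3/4,\infty}$ control out of these a priori weaker classes. Secondary bookkeeping around small $k$, and the harmless $5^{4/3}$ factor incurred when passing from $s_{5k}$ to $s_m$ in the $\limsup$, are routine via monotonicity of singular values.
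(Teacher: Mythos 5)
Your proposal is correct and follows essentially the same route as the paper: the same five-fold kernel decomposition, the same $k$-dependent parameter choices $\varepsilon\sim k^{-1/3}$ and $\d\sim k^{2/(3l)-1/3}$ with $l=3$ for \eqref{eq:psinorm} and $l=4$ for \eqref{eq:psin}, and the same device of coupling the Ky Fan inequality \eqref{eq:2k} with index-dependent parameters to extract $\BS_{3/4,\infty}$ control from pieces lying a priori only in $\BS_{6/7,\infty}$. The only immaterial differences are that the paper first groups the five operators into two sums via the quasi-triangle inequality \eqref{eq:triangle} before applying the two-term form of \eqref{eq:2k}, and that you handle the finitely many small $k$ explicitly (your ``combine as $e^{-(\varkappa_{\scalel{2}}+\varkappa_{\scalel{3}})|n|_{\scale}/2}$'' is a harmless slip --- one simply bounds each coefficient by $e^{-\varkappa_{\scalel{3}}|n|_{\scale}}$ using \eqref{eq:monotone}).
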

   
\begin{proof} 
Now we can put together all the estimates for the singular numbers, 
obtained above. Without loss of generality assume that $\|b\|_{\plainL\infty}\le 1$ and 
$\|a\|_{\plainL2(\CC)}\le 1$.

By \eqref{eq:split1}, \eqref{eq:split2}, \eqref{eq:split3} 
and \eqref{eq:phidelsplit} we have 
\begin{align*}
\psi = \xi^{(\d, \varepsilon)} + \b^{(\d, \varepsilon)}
+ \eta^{(\d)} + 
\psi_{12}^{(\d)} + \psi_2^{(\d)}.  
\end{align*}
According to \eqref{eq:psidelcube},\eqref{eq:phi11de}, and the inequality \eqref{eq:triangle},
\begin{align*}
\|b\,\iop(\xi^{(\d, \varepsilon)}+\psi_2^{(\d)})a\1_{\CC_n}\|_{6/7, \infty}^{6/7}
\le &\ 7\big(\|b\,\iop(\xi^{(\d, \varepsilon)})a\1_{\CC_n}\|_{6/7, \infty}^{6/7} 
+ \|\iop(\psi_2^{(\d)})a\1_{\CC_n}\|_{6/7, \infty}^{6/7}\big)\\
\lesssim  &\ e^{-6\varkappa_{\scalel{2}}|n|_\scale/7}\big(\varepsilon^{\frac{1}{2}} M^{(2)}(b) + \d^{\frac{3}{2}}
\big)^{6/7},  
\end{align*}
so that, by definition \eqref{eq:quasi},
\begin{align}\label{eq:sw}
s_k\big(b\,\iop(\xi^{(\d, \varepsilon)}+&\ \psi_2^{(\d)})a\1_{\CC_n}\big)\notag\\[0.2cm]
\lesssim &\ 
e^{-\varkappa_{\scalel{2}}|n|_\scale}\big(\varepsilon^{\frac{1}{2}} M^{(2)}(b) + \d^{\frac{3}{2}}
\big)   
\ k^{-\frac{7}{6}},\quad k = 1, 2, \dots.
\end{align}
Similarly, using \eqref{eq:psi12d}, \eqref{eq:etad} and \eqref{eq:phi12de} with 
one and the same $l\ge 3$, we obtain that 
\begin{align*}
\|b\,\iop(\b^{(\d, \varepsilon)}+\eta^{(\d)}+\psi_{12}^{(\d)})a\1_{\CC_n}\|_{q, \infty}
\lesssim &\ e^{-\varkappa_{\scalel{l}}|n|_\scale} 
\big(\varepsilon^{-l+\frac{5}{2}} M^{(l)}(b) + \d^{-l+\frac{3}{2}}\big),  
\end{align*} 
with $1/q = 1/2 + l/3$
and hence, 
\begin{align}\label{eq:ss}
s_k\big(b\,\iop(\b^{(\d, \varepsilon)}+&\ \eta^{(\d)}+\psi_{12}^{(\d)})a\1_{\CC_n}\big)\notag\\
\lesssim &\ e^{-\varkappa_{\scalel{l}}|n|_\scale} 
\big(\varepsilon^{-l+\frac{5}{2}} M^{(l)}(b) + \d^{-l+\frac{3}{2}}\big)  
k^{-\frac{1}{2} - \frac{l}{3}},\quad k = 1, 2, \dots.
\end{align}
Due to \eqref{eq:2k} and \eqref{eq:monotone}, combining \eqref{eq:sw} and \eqref{eq:ss}, we get the estimate
\begin{align}\label{eq:2kminus}
s_{2k}(b\Psi_n a)\le &\ s_{2k-1}(b\Psi_n a)\notag\\
\lesssim &\ e^{-\varkappa_{\scalel{l}} |n|_\scale}\bigg[\big(\varepsilon^{\frac{1}{2}} 
M^{(l)}(b) + \d^{\frac{3}{2}}\big)k^{-\frac{7}{6}}
+ \big(\varepsilon^{-l+\frac{5}{2}} M^{(l)}(b) + \d^{-l+\frac{3}{2}}\big)k^{-\frac{1}{2} - \frac{l}{3}}
\bigg],  
\end{align}
where we have used that $M^{(2)}(b)\le M^{(l)}(b)$.
Rewrite the expression in the square brackets, gathering 
the terms containing $\varepsilon$ and $\d$ in two different groups:
\begin{align*}
\big(\varepsilon^{\frac{1}{2}} &\ M^{(l)}(b) k^{-\frac{7}{6}} 
+  \varepsilon^{-l+\frac{5}{2}} M^{(l)}(b) k^{-\frac{1}{2} - \frac{l}{3}} \big)
+ \big(\d^{\frac{3}{2}} k^{-\frac{7}{6}} + \d^{-l+\frac{3}{2}}k^{-\frac{1}{2} - \frac{l}{3}}\big)\notag\\
= &\ 
\varepsilon^{\frac{1}{2}} M^{(l)}(b) k^{-\frac{7}{6}}\big( 1 
+  \varepsilon^{-l+2} k^{\frac{2-l}{3}} \big)
+ \d^{\frac{3}{2}} k^{-\frac{7}{6}} \big(1 + \d^{-l}k^{\frac{2-l}{3}}\big).
\end{align*}
Since $\varepsilon\in (0,1]$ 
and $\d\in [2\varepsilon, 2]$ are arbitrary, we can pick $\varepsilon = \varepsilon_k = k^{-1/3}$ and 
$\d = \d_k = 4k^{2/(3l) -1/3}$, so that  the condition 
$\d>2\varepsilon$ is satisfied for all $k = 1, 2, \dots$, and 
\begin{align*}
\varepsilon^{-l+2} k^{\frac{2-l}{3}}  = &\ 1,\quad 
\varepsilon^{\frac{1}{2}} k^{-\frac{7}{6}} = k^{-\frac{4}{3}},\\
\d^{-l}k^{\frac{2-l}{3}} = &\ 4^{-l}, 
\quad \d^{\frac{3}{2}} k^{-\frac{7}{6}} = 4^{\frac{3}{2}}k^{\frac{1}{l} - \frac{5}{3}}.
\end{align*}
Thus the bound \eqref{eq:2kminus} rewrites as
\begin{align}\label{eq:transform}
s_{2k}(b\Psi_n a)\le s_{2k-1}(b\Psi_n a)\le e^{-\varkappa{\scalel{l}}|n|_{\scale}}
\big(M^{(l)}(b)k^{-\frac{4}{3}} + k^{\frac{1}{l} - \frac{5}{3}}\big). 
\end{align}
Using the bound $M^{(l)}(b)\lesssim \|b\|_{\plainL\infty}\le 1$, and taking $l=3$ we conclude that 
\begin{align*}
s_k(b\Psi_n a)\lesssim e^{-\varkappa_{\scalel{3}}|n|_{\scale}} 
k^{-\frac{4}{3}}. 
\end{align*}
This leads to \eqref{eq:psinorm}. 

In order to obtain \eqref{eq:psin}, we use \eqref{eq:transform} to write
\begin{align*}
\limsup_{k\to\infty}k^{\frac{4}{3}} s_{k}\big(b\Psi_n a\big)
\lesssim  
e^{-\varkappa_{\scalel{l}}|n|_\scale} \limsup_{k\to \infty}\big(M^{(l)}(b) 
+ k^{\frac{1}{l}-\frac{1}{3}}\big).
\end{align*} 
Taking $l = 4$ we ensure that the second term in the brackets tends to zero. 
Therefore,
\begin{align*}
\limsup_{k\to\infty}k^{\frac{4}{3}} s_{k}\big(b\Psi_n a\big)
\lesssim 
e^{-\varkappa_{\scalel{4}} |n|_\scale} M^{(4)}(b).
\end{align*} 
Applying definition \eqref{eq:limsupinf}, we arrive at \eqref{eq:psin}.
\end{proof}

\begin{proof}[Proof of Theorems \ref{thm:psifull} and \ref{thm:main}] 
Since $\Psi = \sum_{n\in\Z^3} \Psi_n$, we have, 
by \eqref{eq:triangle} and \eqref{eq:psinorm},
\begin{align*}
\|b\Psi a\|_{3/4, \infty}^{3/4}\le &\ 4 \sum_{n\in\Z^3} \|b\Psi_n a\|_{3/4, \infty}^{3/4}\\
\lesssim &\ \|b\|_{\plainL\infty}^{\frac{3}{4}}\sum_{n\in\Z^3} e^{-\frac{3}{4}\varkappa_{\scalel{3}} |n|_\scale} 
\|a\|_{\plainL2(\CC_n)}^{\frac{3}{4}}
= \|b\|_{\plainL\infty}^{\frac{3}{4}} \big(S_{3/4}^{(3)}(a)\big)^{\frac{3}{4}}
<\infty.
\end{align*}
This proves \eqref{eq:psifullest}. 

To prove \eqref{eq:psifull} we use Lemma \ref{lem:triangleg}. According to \eqref{eq:triangleg} 
and \eqref{eq:psin}, 
\begin{align*}
\SfG_{3/4} (b\Psi a)\le &\ 4 \sum_{n\in\Z^3} \SfG_{3/4}(b\Psi_n a)\\
\lesssim &\ \big(M^{(4)}(b)\big)^{\frac{3}{4}}\sum_{n\in\Z^3} e^{-\frac{3}{4}\varkappa_{\scalel{4}} |n|_\scale} 
\|a\|_{\plainL2(\CC_n)}^{\frac{3}{4}}
= \big(M^{(4)}(b)\big)^{\frac{3}{4}} \big(S_{3/4}^{(4)}(a)\big)^{\frac{3}{4}}
<\infty.
\end{align*}
This completes the proof of Theorem \ref{thm:psifull}.

Using \eqref{eq:psifullest} with $a(x) = 1$ and $b(\hat\bx) = 1$ we get 
$\|\Psi\|_{3/4, \infty}<\infty$, which implies 
that $s_k(\Psi)\lesssim k^{-4/3}$, and hence $\l_k(\G) = s_k(\Psi)^2\lesssim k^{-8/3}$. This proves 
Theorem \ref{thm:main}.
\end{proof}     

\textbf{Acknowledgments.} The author is grateful to S. Fournais, T. Hoffmann-Ostenhof, 
M. Lewin and T. \O. S\o rensen
for stumulating discussions and advice. 

The author was supported by the EPSRC grant EP/P024793/1.

\bibliographystyle{../beststyle}

\end{document}